 \theoremstyle{plain}
\newtheorem{proposition}{Proposition}
\newtheorem{remark}{Remark}
\newtheorem{theorem}{Theorem}
 \newcommand{\ep}{\varepsilon}
 \newcommand{\R}{\mathbb R}
\newcommand{\Cc}{\mathcal C}
\newcommand{\Uu}{\mathcal U}
\newcommand{\Sa}{{\mathcal S}_\alpha}
\newcommand{\cv}{\cdot}
\newcommand{\fun}{\longrightarrow}
\begin{document}

 \title[ Axial Lines  at critical points of Surfaces in $\mathbb R^4$]{ Lines of axial curvature at critical points on  surfaces 
 mapped into 
  $\mathbb R^4$}

\author{ R. Garcia and J. Sotomayor }


\thanks{Both authors were partially supported by   CNPq. }

\keywords
 { axiumbilic point, ellipse of curvature, singular point of Whitney}

\begin{abstract}
In this paper are studied the simplest patterns of axial curvature lines (along which the normal curvature vector is at a vertex
of the ellipse of curvature)  near a critical point of a surface  mapped into $\mathbb R^4$.
These critical points, 
where  the rank of the mapping drops 
from 
$2$
to $1$, occur isolated   in  generic one parameter families of mappings of
surfaces into   $\mathbb R^4$.
As the parameter 
crosses 
a
critical bifurcation 
 value, at  which the mapping  has a critical point, 
 it is 
described 
how 
the axial umbilic points, which are the singularities of the axial curvature configurations at regular points, 
 move   along  smooth  arcs   to 
reach the critical point.
The numbers  of such arcs  and their  axial umbilic types, ( see \cite{sotogarcia1}, 
\cite{sotogarcia}),  
are fully described 
for 
a  typical family of mappings with a critical point.
\end{abstract}

\maketitle

\section{ Introduction}
\def\G{\Gamma}
\def\re{\text{\bf I\!\!  R}}
\def\s1{\text{\bf S}^1}
 \def\ene{\text{\bf I\!  N}}

 \def\ps{\psi}
\def\g{\gamma}

 \def\bs{\text{\bf S}}

 The study of the curvature of surfaces as a measure of how they bend  when mapped into  $\R ^k$,  for  $k  \geq   3, \, $ is the source of 
challenging problems  in Geometry and Analysis. 

Ideas and methods  coming from the Qualitative Theory of Differential Equations, Singularity Theory  and Dynamical Systems, such as Structural Stability,
 Critical Points  and Bifurcations have 
 been the subject of numerous recent research  contributions. The  departure point for this development, however, is not disjoint from the work of the pioneers such as Monge, Gauss and Darboux,
 to mention just a few.  We refer the reader to    Little  \cite{little},  Sotomayor \cite{sgresenha}, Porteous \cite{Porteous} and Garcia-Sotomayor\cite{sotogarcia},  among others,  for presentations of the several  branches of this field and  for references.

According to Whitney's Immersion Theorem  \cite{lt}, a generic map of a surface into $\mathbb R^4$  is an immersion (i.e. has rank $2$ everywhere) which is one-to-one except at a discrete set of  pairs of {\it double} points at which the images of the map have transverse crossings. 

This paper studies how the typical normal curvature  geometric singularities  and their associated axial foliations  reach the  simplest {\it critical}  points, at which 
the  rank of a  mapping 
of  a surface into $\mathbb R^4$  
   depending on one parameter
drops to $1$. This  happens when, moving with the parameter,  a pair of double points with transversal images come together at the critical point.

The  interest in the study of the interaction of  geometric foliations  defined by normal curvature properties with  structurally stable critical points  occurring in the supporting surfaces
mapped into Euclidean spaces 
appear  in recent works.
 See 
 \cite{coneBSM}, \cite{ggs}, \cite{oli}, \cite{tari},
 to mention just a few references.

 In this paper  is presented  in detail a case study  of  
  the situation where
  the critical point is not structurally stable  but  appears stably 
  and generically 
  in one parameter families of
 regular mappings.

 In other words, this paper deals with the bifurcations of the   Configurations of Extremal Curvature Deviation  (also called Axial Configurations)    at the singular  points on a surface  whose mapping  into  $\R ^4$,   changes  with a parameter. 
Such  configurations  appear  in the families of curves  
 of  {\em maximal}  (also called Axial Principal) and {\em minimal}  (Axial Mean)  normal curvature deviation from the {\em mean normal curvature}.  When the image is in a $3-$dimensional subspace, 
 they are, respectively,  the  principal and mean normal curvature configurations.
 
Most singular points occur along arcs  of  {\em axiumbilc points}, where the mapping has rank $2$ (i.e. it is regular)   and the maximal and minimal normal curvature deviations coincide,
 that is the {\em Ellipse of Normal Curvature}  is a circle. 
 
 The principal configurations around the Stable Axiumbilic points 
 were  described  in \cite{sotogarcia1}   and  around 
 their generic regular bifurcation  values  in \cite{flausino_arxiv}. 
 
 In this paper are studied the critical singular points, at which the rank of the mapping is $1$. Such points appear persistently (i.e. transversally) 
 in one parameter families of mappings.  
 This is done for a special class of mappings with a codimension $3$ critical point.

 The conclusions  of this work  are outlined below.
 
Sections \ref{sec:deax} and \ref{sec:3} 
 contain a review of the standard  theory of  the configurations of axial curvature lines and  
 axiumbilic singular points $E_3,\; E_4, \; E_5$
  appearing typically   
 at regular  points
of the  mapping, as established in \cite{sotogarcia1, sotogarcia}.

In section \ref{sec:4} is carried out the
study of a special typical family  $\alpha^a$, see equation \eqref{eq:familyW},
 reminiscent of the  Whitney Umbrella   critical point.
Depending on 
a cubic term,  represented by the parameter  $a$ in equation
 (\ref{eq:familyW}),
the {\it index}  of the axial
configuration  
 around the critical point
 can be    either  $1/2$  or $0$.
 
 The 
 axial configurations, for most parameters  $a$  are illustrated in Figure
 \ref{fig:acm} for index $1/2$ and  in Figure 
\ref{fig:aco} for index $0$.

As the deformation parameter
 crosses the critical bifurcation value, the following  holds:

In the index $1/2$  case two arcs of axiumbilic singularities of the
 $E-$types
 converge to, at crossing, and  emerge from, after crossing,   the critical point.
Two generic topological patterns, depending on the types $E_3$ and $E_4$ involved,  are possible. See Figure  \ref{fig:ace}.

In the index $0$ case four arcs of axiumbilc points, two of type $E_3$ 
and two of type $E_5$ 
converge toward the critical point and they are eliminated after crossing.
Two generic combinatorial arrangements are possible in this case. 
See Figure   \ref{fig:ace35}.

The authors believe that the results outlined above describe  
 the axial configurations at the generic  critical point of a surface mapped into $\R ^4$, as illustrated in Figures \ref{fig:acm} and \ref{fig:aco}.  They also present  a  rough description of   partial elements 
of the transversal,  codimension $1$,  bifurcations occurring by the elimination of the critical point.  
For the full description  one must be carry out  a delicate analysis of the breaking of the axiumbilic separatrix connections in Figures  \ref{fig:ace35} and   \ref{fig:ace}, 
 due to the presence of coefficients of  third order jet of the mapping omitted in the example treated here.

\section{\label{sec:deax} Differential Equation of Axial Lines near a Singular Point }

Consider a mapping $\alpha$ of class 
$C ^ r,  r \geq 5,$ 
 of
$\mathbb R^ 2 \times  \mathbb R $ into $\mathbb R^ 4 $, endowed with the Euclidean inner product $\langle \cdot , \cdot  \rangle$.
Take coordinates $u, v, \epsilon $ in the domain and
$x, y, z, w$ in the target. 
Assume that the origin is mapped into the origin by $\alpha$.

Write 
$\alpha_{\epsilon} = \alpha( u, v, \epsilon )$  and refer to it as a one-parameter  deformation of $\alpha_{0}$.

The 
{\it critical}
 set of $\alpha$ is the set   $\Sa$
 of points 
$(u,v,\epsilon)$
 such that $D\alpha_\epsilon (u,v) $ has rank less than $2$.
  
  The 
  set of regular 
  points,
   where $D\alpha_\epsilon$ has rank $ 2,$   
   will be 
   denoted by 
    $\mathcal R _\alpha$.

Suppose that $\alpha_{0}$ has rank $1$ at $(0,0)$, say  that  the 
chart $u,v$ is 
{\it adapted} 
if  $\frac{\partial \alpha_0}{\partial v} (0,0)  = 0$.

Critical points in this work will also  satisfy the {\it Whitney condition}. This means that 

\begin{equation} \label{cond:W}
 W=\alpha_u\wedge \alpha_{uv}\wedge \alpha_{vv} \ne 0.
 \end{equation}

\begin{remark}  \label{rem:WC}
Straight calculation shows that the Whitney condition defined by (\ref{cond:W}) at a critical point does not depend on the adapted chart.
\end{remark}

 The first fundamental form of the mapping $\alpha$ in the chart  $(u,v)$ is expressed by:

$$I_{\alpha} = \langle D\alpha, D\alpha \rangle  = E du^2 + 2Fdudv + G dv^2,$$

 where
$E=\langle \alpha_u,\alpha_u \rangle, F=\langle\alpha_u,\alpha_v\rangle$ and $G=\langle\alpha_v,\alpha_v\rangle.$

\begin{remark} \label{rem:WC-I}
It is a standard fact on positive definite quadratic forms that 
$D = EG-F^2 \geq 0$   and that $D = 0$  only  at  critical points.

Straight calculation shows that the   Whitney condition (\ref{cond:W})   is equivalent to require  that $D$  have  for $\alpha_0$  a non-degenerate critical point at $u=0, v=0$.
\end{remark}

Assuming the Whitney condition in an adapted chart, define the 
vectors

$\mathbf{N}_1=\alpha_u\wedge \alpha_{v}\wedge W$ and $\mathbf{N}_2=\alpha_u\wedge \alpha_{v}\wedge \mathbf{N}_1$, 
which 
give 
a normal frame at regular points of $\alpha$.

 The second  fundamental form  of $\alpha$, at regular points,  is
 defined 
  by:
$II_{\alpha} = II^1_{\alpha} \mathbf{N}_1/|\mathbf{N}_1| + II^2_{\alpha} \mathbf{N}_2/|\mathbf{N}_2|$, 
where
$II^i_{\alpha}, i=1,2$  is given by
$$II^i_{\alpha} := \langle D^2 \alpha, \mathbf{N}_i/|\mathbf{N}_i|\rangle  = e_i du^2+2f_i du dv+g_i dv^2$$
where,
$e_i=\langle\alpha_{uu},\mathbf{N}_i/|\mathbf{N}_i|\rangle, f_i=\langle\alpha_{uv},\mathbf{N}_i/|\mathbf{N}_i|\rangle$ and  $g_i=\langle\alpha_{vv},\mathbf{N}_i/|\mathbf{N}_i|\rangle$. 

The  \textit{mean normal curvature vector } of $\alpha$  is defined by $H=h_1 \mathbf{N}_1/|\mathbf{N}_1| +h_2 \mathbf{N}_2/|\mathbf{N}_2|$,
where 
$$h_i=\frac{Eg_i-2Ff_i+Ge_i}{2(EG-F^2)}.$$

For
  $\vec{v} \in  T_{p}   
  \mathbb R ^2 \setminus \{  0 \} $, the  \textit{normal curvature vector in the direction    $\vec{v}$}  
  at a regular point $p =(u,\vec{v}) $ of $\alpha$, 
  is defined by
\begin{equation}
\label{kn}
k_n=k_n(p,\vec{v}) = \frac{II_{\alpha}(\vec{v})}{I_{\alpha}(\vec{v})}= \frac{II^1_{\alpha}(\vec{v})}{I_{\alpha}(\vec{v})}\frac{\mathbf{N}_1}{|\mathbf{N}_1|} + \frac{II^2_{\alpha}(\vec{v})}{I_{\alpha}(\vec{v})}\frac{\mathbf{N}_2}{|\mathbf{N}_2|}
\end{equation}

At regular points $p$, 
the image of  $k_n$ restricted to the unitary circle   $S^1_{p}$  of $T_{p}
\mathbb R ^2$  (endowed with the metric  $I_{\alpha}$) describes
an ellipse $\ep_{\alpha}(p)$ centered   
at
  $H(p)$,  contained     in the space $N_{p}
$  normal  to $\alpha$.  It  is called  the    \textit{ellipse of curvature}
  of $\alpha$ at $p$.  
  See \cite{little},   \cite{sotogarcia}.
 
  Assuming that  $(e_1-g_1)f_2 - (e_2-g_2)f_1\neq 0$, $\ep_{\alpha}(p)$ is a standard ellipse or a circle, otherwise it can be a segment or a point.

 As  $k_n  |_{S^1_{p}}$ is quadratic, the pre-image of each point of the ellipse 
 consists 
 of two antipodal points in   $S^1_{p}$, and therefore   each point $\ep_{\alpha}({ p})$ is associated to a direction  in  $T_{p}
  \mathbb R ^2$. 
 Moreover, for each pair 
 of points in   $\ep_{\alpha}({ p})$, antipodally  symmetric 
 with respect 
 to  $H({  p})$, it is associated two orthogonal directions in     $T_{p} 
  \mathbb R ^2$,
   defining a pair of   \textit{lines} in  $T_p 
    \mathbb R ^2$,
 a {\it crossing}
  \cite{luisfernando}.

Regular  points 
where 
the ellipse  
$\ep_{\alpha}({ p})$
is a point or a circle 
are called  \textit{axiumbilic points } 
 of the 
 mapping 
 $\alpha$.  They will be denoted by $\Uu_{\alpha}$.
 
 For points $p$ away from $\Uu_{\alpha}$
 the directions in  
 $ T_{p} \mathbb R ^2 \setminus \{  0 \} $
 at which  $\|k_n (p, \cdot)-H(p)\|^2$ is 
 maximal 
define a pair of lines, a \emph{crossing}, 
called 
\emph{principal axial} or 
 of maximal normal curvature deviation.
 Analogously, directions at 
 which
  $\|k_n (p, \cdot)-H(p)\|^2$ is 
 minimal 
define a pair of lines, a \emph{crossing}, 
called 
\emph{mean axial} or 
 of minimal normal curvature deviation.
 
 The function  $\|k_n (p, \cdot) -H(p)\|^2$, 
 measures the deviation from the mean normal curvature at $p$.  
It is constant when $p$ is an axiumbilic point.

 \begin{remark} \label{rem:Princ-Mean-R3}
 The name axial directions come from the fact that there
 the normal curvature point at the extremes of the axes of the ellipse of curvature.
 
 The names \emph{principal} and \emph{mean} are justified by
 the case in which the mapping $\alpha$ has its image in 
  $\R ^3$ these lines are respectively  the principal directions 
  and the directions of the  mean normal three dimensional curvature.
 \end{remark}

  The set of axiumbilic points  $\Uu_{\alpha}$
  together with the critical points 
  $\Sa$ 
 of $\alpha$    
  are called the {\it singularities} of the fields of
  axial crossings   
   of the mapping $\alpha$.    By
   abuse of  terminology assignment  
   these fields of crossings are 
   sometimes referred to as line fields. 
   See ( \cite{sotogarcia1},\cite{sotogarcia}, \cite{luisfernando}).

The axial or  directions 
of extremal normal curvature deviation 
are defined by the equation

$$Jac(\|k_n-H\|^2, I_{\alpha})=0$$

which has four solutions for  $
p \notin \Uu_{\alpha}\cup {\mathcal S}_\alpha$,
the directions at which the normal curvature is at a vertex of $\ep_\alpha (p)$.
At  $p  \in {\Uu}_{\alpha}$ it vanishes identically  and at $ p \in \cup {\mathcal S}_\alpha$ it is not defined. Below will be shown how to extend it to critical points satisfying the Whitney condition in \ref{cond:W}. 

According to  \cite{sotogarcia1} and  \cite{sotogarcia}, the  \textit{differential equation of axial lines}  is given by
\begin{equation} \label{quartica.axiais}
\aligned
{\mathcal G}=& a_4 dv^4+a_3 dv^3 du+ a_2 dv^2 du^2 + a_1 dv du^3 + a_0 du^4 = 0,\\
a_1=&
4E^3(g_1^2+g_2^2)+4G(EG-4F^2)(e_1^2+e_2^2) \\
+&
32EFG(e_1f_1+e_2f_2)-16E^2G(f_1^2+f_2^2)-8E^2G(e_1g_1+e_2g_2),\\
 a_0=&
4F(EG-2F^2)(e_1^2+e_2^2)-4E(EG-4F^2)(e_1f_1+e_2f_2) \\
+&
-8E^2F(f_1^2+f_2^2)-4E^2F(e_1g_1+e_2g_2)+4E^3(f_1g_1+f_2g_2)\\
 Ea_2 &=-6G a_0+ 3F a_1;\\
E^2a_3 &= (4F^2-EG) a_1 - 8FG a_0;\\
 E^3a_4 &=G(EG-4F^2)a_0+ F(2F^2-EG)a_1.
 \endaligned
\end{equation}

Define the functions
 
\begin{equation}\label{eq:lmn} \aligned L_1=&Fg_1-Gf_1, \;\;\; M_1=Eg_1-Ge_1,\;\;\;  N_1=Ef_1-Fe_1\\
 L_2=&Fg_2-Gf_2, \;\;\; M_2=Eg_2-Ge_2,\;\;\;  N_2=Ef_2-Fe_2\endaligned\end{equation}
 
 It follows that:
 
 \begin{equation}\label{eq:ai}\aligned a_0=& 4(M_1N_1 +M_2N_2)E-8(N_1^2+N_2^2)F\\
 a_1=&4 (M_1^2+ M_2^2)E-16(N1^2+N_2^2)G\\
 a_2=& 12(M_1^2+M_2^2)F-24(M_1N_1+M_2N_2) G\\
 a_3=& 16(M_1L_1+M_2L_2)F-4(M_1^2+M_2^2+4N_1L_1+4N_2L_2)G\\
 a_4=& 8(L_1^2+L_2^2)F-4(M_1L_1+M_2L_2) G
 \endaligned
 \end{equation}

\begin{proposition}  
\label{eq.dif.axial}
Let  $\alpha: M \fun \R^4$ be a mapping of class  ${\Cc}^r, \ r\geq 5$, of a
smooth surface having critical  points satisfying the  Whitney condition \eqref{cond:W}. Denote the first fundamental form of $\alpha$    by:
$$I_{\alpha} = E du^2 + 2Fdudv + G dv^2$$
and the second fundamental form by:
$$II_{\alpha}= (e_1 du^2+2f_1 du dv+g_1 dv^2)\mathbf{N}_1/|\mathbf{N}_1| + (e_2 du^2+2f_2 du dv+g_2 dv^2) \mathbf{N}_2/|\mathbf{N}_2|$$
where $\mathbf{N}_1=\alpha_u\wedge \alpha_{v}\wedge W$ and $\mathbf{N}_2=\alpha_u\wedge \alpha_{v}\wedge \mathbf{N}_1$,   is a 
frame outside the critical points of $\alpha$  and  
such that $\{\alpha_u, \alpha_v, \mathbf{N}_1/|\mathbf{N}_1|,\mathbf{N}_2/|\mathbf{N}_2|\}$ is a positive frame at regular points of $\alpha$.

\begin{enumerate}
\item[i)] The differential equation

\begin{equation}\label{eq:dasing}\aligned
\overline{{\mathcal G}}=&\overline{a_4 } dv^4+\overline{a_3 } E dv^3 du+ \overline{a_2 }E^2 dv^2 du^2 + \overline{a_1 } E^3dv du^3 + \overline{a_0 } E^3 du^4 = 0\\
\overline{a_0}=&4E[(EG-F^2)\overline{N_1}\; \overline{M_1}+\overline{M_2}\;\overline{N_2}] -8F[(EG-F^2)\overline{N_1}^2+\overline{N_2}^2]\\
\overline{a_1}=& 4E[(EG-F^2)\overline{M_1}^2+\overline{M_2}^2]-16G[(EG-F^2)\overline{N_1}^2+\overline{N_2}^2] \\
  E\overline{a_2} &=-6G \overline{a_0} + 3F \overline{a_1} ;\\
E^2\overline{a_3}  &= (4F^2-EG) \overline{a_1}  - 8FG \overline{a_0} ;\\
 E^3\overline{a_4}  &=G(EG-4F^2)\overline{a_0} + F(2F^2-EG)\overline{a_1}.
\endaligned
\end{equation}
is a regular extension of the differential equation of axial lines to the singular set of
$\alpha$. Here the functions $\overline{M_i}$ and $\overline{N_i}$ are given in equation \eqref{eq:lmnb}.

\item [ii)] The axiumbilic and singular points of  $\alpha$  are characterized by  $\overline{a_0}=\overline{a_1}=0$.
\end{enumerate}

\end{proposition}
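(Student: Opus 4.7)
The plan is to pass from the unit normal frame $\{\mathbf{N}_1/|\mathbf{N}_1|,\mathbf{N}_2/|\mathbf{N}_2|\}$, which degenerates along the critical set $\Sa$, to the un-normalized frame $\{\mathbf{N}_1,\mathbf{N}_2\}$. First I would introduce the un-normalized second fundamental coefficients
$$\bar e_i=\langle\alpha_{uu},\mathbf{N}_i\rangle,\quad \bar f_i=\langle\alpha_{uv},\mathbf{N}_i\rangle,\quad \bar g_i=\langle\alpha_{vv},\mathbf{N}_i\rangle,\qquad i=1,2,$$
so that on $\mathcal R_\alpha$ one has $e_i=\bar e_i/|\mathbf{N}_i|$, $f_i=\bar f_i/|\mathbf{N}_i|$, $g_i=\bar g_i/|\mathbf{N}_i|$. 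Feeding these into \eqref{eq:lmn} yields $M_i=\overline{M_i}/|\mathbf{N}_i|$, $N_i=\overline{N_i}/|\mathbf{N}_i|$, $L_i=\overline{L_i}/|\mathbf{N}_i|$, where $\overline{M_i},\overline{N_i},\overline{L_i}$ are the polynomial barred quantities of \eqref{eq:lmnb} (i.e. \eqref{eq:lmn} with barred second form).

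The key algebraic identity I would then prove is
$$|\mathbf{N}_2|^2=(EG-F^2)\,|\mathbf{N}_1|^2.$$
This is immediate from the construction $\mathbf{N}_1=\alpha_u\wedge\alpha_v\wedge W$ being orthogonal to both $\alpha_u$ and $\alpha_v$: the Gram matrix of $\{\alpha_u,\alpha_v,\mathbf{N}_1\}$ is block diagonal and its determinant, which equals $|\mathbf{N}_2|^2=|\alpha_u\wedge\alpha_v\wedge\mathbf{N}_1|^2$, factors as claimed. Substituting $M_i=\overline{M_i}/|\mathbf{N}_i|$ and $N_i=\overline{N_i}/|\mathbf{N}_i|$ into the expressions for $a_0,a_1$ in \eqref{eq:ai} and multiplying through by $|\mathbf{N}_2|^2$ converts the factor $1/|\mathbf{N}_1|^2$ into $EG-F^2$ and the factor $1/|\mathbf{N}_2|^2$ into $1$, reproducing exactly the announced formulas for $\overline{a_0}$ and $\overline{a_1}$. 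Because the triangular system expressing $\overline{a_2},\overline{a_3},\overline{a_4}$ in terms of $\overline{a_0},\overline{a_1}$ is formally identical to that of \eqref{quartica.axiais}, one obtains $\overline{a_i}=|\mathbf{N}_2|^2\,a_i$ for every $i$ on $\mathcal R_\alpha$. Since $|\mathbf{N}_2|^2>0$ there, the equations $\mathcal G=0$ and $\overline{\mathcal G}=0$ define the same field of axial crossings on the regular set, while the barred coefficients, being polynomials in the globally smooth data $E,F,G,\bar e_i,\bar f_i,\bar g_i$, extend smoothly across $\Sa$. This establishes item (i).

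For item (ii), on $\mathcal R_\alpha$ the positivity of $|\mathbf{N}_2|^2$ reduces $\overline{a_0}=\overline{a_1}=0$ to $a_0=a_1=0$; the triangular relations in \eqref{quartica.axiais} then propagate vanishing to $a_2,a_3,a_4$, so that $\mathcal G$ vanishes identically, which is precisely the axiumbilic condition. At a critical point, in an adapted chart $\alpha_v(0,0)=0$, hence $\mathbf{N}_1=\mathbf{N}_2=0$ there, whence $\bar e_i,\bar f_i,\bar g_i$ all vanish and so do $\overline{L_i},\overline{M_i},\overline{N_i}$; this forces $\overline{a_0}=\overline{a_1}=0$ automatically, completing the characterization.

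The delicate point I anticipate is controlling the coefficients $\overline{a_2},\overline{a_3},\overline{a_4}$, which are prescribed via division by powers of $E$: one must invoke Whitney's condition \eqref{cond:W} to conclude $\alpha_u(0,0)\ne 0$ (otherwise $W=0$), so that $E>0$ on a neighborhood of the critical point and the division causes no singularity. A parallel bookkeeping step, relying on Remarks~\ref{rem:WC} and \ref{rem:WC-I}, is to verify that $|\mathbf{N}_1|$ vanishes exactly on $\Sa$, so that the multiplier $|\mathbf{N}_2|^2=(EG-F^2)|\mathbf{N}_1|^2$ does not introduce spurious zeros on $\mathcal R_\alpha$ beyond those carried by $a_0,a_1$ themselves.
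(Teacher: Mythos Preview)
Your proposal is correct and follows essentially the same route as the paper's proof: introduce the un-normalized coefficients $\bar e_i,\bar f_i,\bar g_i$, invoke the identity $|\mathbf{N}_2|^2=(EG-F^2)|\mathbf{N}_1|^2$, and multiply the axial equation by $|\mathbf{N}_2|^2$ to clear the singular denominators, then read off the triangular relations for $\overline{a_2},\overline{a_3},\overline{a_4}$. If anything, you are more explicit than the paper in justifying the Gram-matrix identity, in handling item~(ii), and in flagging that Whitney's condition guarantees $E>0$ so the divisions by powers of $E$ are harmless.
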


\begin{proof}
To obtain a regular extension of the differential
equation 
 of axial  curvature lines to the set of 
 critical 
 points $\Sa$ it is convenient to write 
 
 $\overline{e_i}= |\mathbf{N}_i|e_i=\langle \alpha_{uu},\mathbf{N}_i\rangle,\;\overline{f_i}= |\mathbf{N}_i|f_i=\langle \alpha_{uv},\mathbf{N}_i\rangle,$ \;
$\overline{g_i}= |\mathbf{N}_i|g_i=\langle \alpha_{vv},\mathbf{N}_i\rangle. $

At a critical 
 point $p\in \Sa$ it holds
 that $\mathbf{N}_1(p)=\mathbf{N}_2(p)=0$ 
 only at $p=(0,0)$
 in an adapted frame
 with the Whitney condition imposed.
 
From differential equation \eqref{quartica.axiais}  it follows that:

\begin{equation}\label{eq:a1aob}
 \aligned  a_0 |\mathbf{N}_1|^2|\mathbf{N}_2|^2
=&4F(EG-2F^2)(\overline{e_1}^2|\mathbf{N}_2|^2 + \overline{e_2}^2 |\mathbf{N}_1|^2)\\
-&  4E(EG-4F^2)(\overline{e_1}\cv \overline{f_1}|\mathbf{N}_2|^2 +\overline{e_2} \cv \overline{f_2}|\mathbf{N}_1|^2) \\
 +& 4E^3(\overline{f_1}  \cv\overline{g_1} |\mathbf{N}_2|^2 +  \overline{f_2} \cv\overline{g_2}|\mathbf{N}_1|^2)
-  4E^2F (\overline{e_1}\cv \overline{g_1}|\mathbf{N}_2|^2+\overline{e_2} \cv \overline{g_2}|\mathbf{N}_1|^2) \\
-& 8E^2F (\overline{f_1} ^2 |\mathbf{N}_2|^2+  \overline{f_2}^2|\mathbf{N}_1|^2)\\
  a_1|\mathbf{N}_1|^2|\mathbf{N}_2|^2 =& 4G(EG-4F^2)(\overline{e_1}^2 |\mathbf{N}_2|^2
  + \overline{e_2}^2  |\mathbf{N}_1|^2)\\
+ & 32 EFG(\overline{e_1} \cv \overline{f_1} |\mathbf{N}_2|^2+ \overline{e_2}\cv \overline{f_2} |\mathbf{N}_1|^2)
 + 4E^3(\overline{g_1}^2  |\mathbf{N}_2|^2+ \overline{g_2}^2|\mathbf{N}_1|^2)\\
- & 8E^2G (\overline{e_1}\cv \overline{g_1} |\mathbf{N}_2|^2+\overline{e_2}\cv \overline{g_2}|\mathbf{N}_1|^2)
-  16E^2G (\overline{f_1}^2 |\mathbf{N}_2|^2 + \overline{f_2}^2|\mathbf{N}_1|^2) 
\endaligned
\end{equation}

By definition of the 
normal frame $\{\mathbf{N}_1,\mathbf{N}_2\}$, under the Whitney condition,  it follows that $|\mathbf{N}_2(p)|^2= (EG-F^2) |\mathbf{N}_1(p)|^2$.

As above define the functions
\begin{equation}\label{eq:lmnb} \aligned \overline{L_1}=&F\overline{g_1}-G\overline{f_1}, \;\;\; \overline{M_1}=E\overline{g_1}-G\overline{e_1},\;\;\;  \overline{N_1}=E\overline{f_1}-F\overline{e_1}\\
\overline{L_2}=&F\overline{g_2}-G\overline{f_2}, \;\;\; \overline{M_2}=E\overline{g_2}-G\overline{e_2},\;\;\; \overline{N_2}=E\overline{f_2}-F\overline{e_2}\endaligned\end{equation}

 Therefore, defining 
 $$\aligned \overline{a_0}= & a_0 |\mathbf{N}_2|^2, \;\;\;\;\; \overline{a_1}=   a_1| |\mathbf{N}_2|^2,\\
  E\overline{a_2} &=-6G \overline{a_0} + 3F \overline{a_1} ;\\
E^2\overline{a_3}  &= (4F^2-EG) \overline{a_1}  - 8FG \overline{a_0} ;\\
 E^3\overline{a_4}  &=G(EG-4F^2)\overline{a_0} + F(2F^2-EG)\overline{a_1}.
\endaligned$$
and performing the simplifications  using \eqref{eq:lmnb}, the result stated is obtained.\end{proof}

\section{\label{sec:3}  Axial configurations near axiumbilic points on surfaces of $\mathbb R^4$ }

In this section  will be 
recalled  
  the qualitative behavior of the axial configurations  
around   
   a neighborhood of an axiumbilic point:
 principal, corresponding  to the maxima and minimal normal curvature deviation from $H_\alpha$.  
 The first is  denoted by   $\mathbb P_\alpha$,
  and
 the second by    $\mathbb Q_\alpha$.

\begin{proposition} (\cite{sotogarcia1}, \cite{sotogarcia})
\label{prop.axial} 
Let $p$ be an axiumbilic point. Then there exists a parametrization $(x,y,R(x,y),S(x,y))$ and a homotety in   $\R^4$ such that the 
differential equation of axial lines is given by:
\begin{equation}
\label{axial.monge}
y(dy^4-6dx^2dy^2+dx^4)+(a x+b y)dxdy(dx^2-dy^2) + H(x,y,dx,dy)=0
\end{equation}
where  $H$ contains terms of order  
greater than 
or equal to  $2$ in  $(x,y)$.
 Moreover, the axiumbilic point  $p$ is transversal, if and only if,  $a \neq 0$. Here the coefficients $a$ and $b$ 
are calculated in terms of $j^3R(0)$ and $j^3S(0)$.
\end{proposition}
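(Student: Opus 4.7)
The plan is to pass to a Monge parametrization at $p$, normalize the $2$-jet of the mapping using the axiumbilic condition together with Euclidean motions of $\R^4$ and a homothety, and then expand the coefficients of \eqref{quartica.axiais} in Taylor series to read off the normal form.

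Since $p$ is a regular point of $\alpha$, an isometry of $\R^4$ placing $p$ at the origin, $T_p\alpha$ on the $xy$-plane and $N_p\alpha$ on the $RS$-plane yields a Monge graph $\alpha(x,y)=(x,y,R(x,y),S(x,y))$ with $R(0)=S(0)=0$ and $\nabla R(0)=\nabla S(0)=0$. At the origin one has $E=G=1$, $F=0$, and $e_i,f_i,g_i$ are Hessian entries of $R$ or $S$. Parametrizing the unit circle by $\theta$, the ellipse of curvature at $p$ takes the form $H+M(\cos 2\theta,\sin 2\theta)^t$, where $M$ is the $2\times 2$ matrix built from the trace-free parts of the Hessians of $R$ and $S$; the axiumbilic hypothesis becomes $MM^t=r^2 I$. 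A rotation in the tangent plane multiplies $M$ on the right by a rotation of angle $2\phi$, a rotation in the normal plane multiplies it on the left, and a homothety of $\R^4$ rescales it. Combining these operations, the $2$-jet of $\alpha$ is brought to a canonical form in which the linear part of the coefficient $a_0$ in \eqref{quartica.axiais} equals exactly $y$.

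Next I would Taylor-expand $a_0$ and $a_1$ in the normalized chart. Proposition \ref{eq.dif.axial}(ii) forces $a_0(0)=a_1(0)=0$, and the normalization gives
\[
a_0 = y + O(|(x,y)|^2), \qquad a_1 = ax + by + O(|(x,y)|^2),
\]
with $a$ and $b$ explicit polynomial expressions in $j^3R(0)$ and $j^3S(0)$ obtained from \eqref{eq:ai}. Substituting these expansions into the cascade $Ea_2=-6Ga_0+3Fa_1$, $E^2a_3=(4F^2-EG)a_1-8FGa_0$, $E^3a_4=G(EG-4F^2)a_0+F(2F^2-EG)a_1$ from \eqref{quartica.axiais} and using $E=G=1+O(|(x,y)|^2)$, $F=O(|(x,y)|^2)$, the quartic assumes the stated form
\[
y(dx^4-6\,dx^2dy^2+dy^4)+(ax+by)\,dx\,dy(dx^2-dy^2)+H(x,y,dx,dy)=0,
\]
with $H$ vanishing to order at least two in $(x,y)$. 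For the transversality claim, by Proposition \ref{eq.dif.axial}(ii) the axiumbilic locus near $p$ is the common zero set of $a_0$ and $a_1$ up to nonvanishing factors, and transversality is the non-vanishing of the Jacobian of $(a_0,a_1)$ at $0$; from the expansions this Jacobian equals $a$, giving the equivalence.

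The main obstacle is the bookkeeping in the expansion step: the formulas \eqref{eq:ai} are long and one must carefully extract the linear-in-$(x,y)$ contributions coming from both the normalized $2$-jet and the $3$-jet of $\alpha$. The identities $dx^4-6\,dx^2dy^2+dy^4=\mathrm{Re}((dx+i\,dy)^4)$ and $4\,dx\,dy(dx^2-dy^2)=\mathrm{Im}((dx+i\,dy)^4)$ expose the $SO(2)$-equivariance of the axial equation under rotations in the tangent plane, which can be exploited via complex notation to organize and shorten the computation.
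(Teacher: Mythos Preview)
The paper does not supply its own proof of this proposition; it is quoted from the references \cite{sotogarcia1}, \cite{sotogarcia}. Your sketch follows exactly the approach used in those sources: reduction to a Monge chart, normalization of the $2$-jet via the $SO(2)\times SO(2)$-action on tangent and normal planes together with a homothety (exploiting that the axiumbilic condition makes the ellipse matrix $M$ a scalar multiple of an orthogonal matrix), and then Taylor expansion of $a_0,a_1$ and the derived coefficients $a_2,a_3,a_4$ from \eqref{quartica.axiais} to read off the linear normal form. Your identification of transversality with the nonvanishing of $\det D(a_0,a_1)(0)$, hence with $a\neq 0$, is also the argument in the original papers (and is used in exactly this form later in the present paper, in the proof of Proposition~\ref{prop:ace}). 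The only caveat is the one you already flag: the actual bookkeeping that yields $a_0=y+O(2)$ and $a_1=ax+by+O(2)$ with $a,b$ expressed through $j^3R(0),j^3S(0)$ is where the content lies, and must be carried out rather than asserted; but the strategy is correct and coincides with the cited proofs.
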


Let \begin{equation}\label{eq:41}
\aligned \Delta(a,b) &= (a+1)^2[I^3-27J^2],\qquad \text{where},\\
 I &=2a(\frac{a}{24}+1)+4+\frac{b^2}{4}\;\;\;\text{and}
  \;\; J = -\frac{2a}{3}[(\frac{a}{6}+1)(1-\frac{a}{24})+\frac{b^2}{16}].
\endaligned \end{equation}

\begin{theorem}
\label{th:41}
  Consider a transversal axiumbilic point, for which  $ a \ne 0$.   Then 
  in the notation of Proposition \ref{prop.axial},  the following holds:
\begin{itemize}
\item[i)]  If $\;\;\Delta(a,b)  < 0,\;\;\;  $ then the axial configurations $\mathbb P_\alpha$ and $\mathbb Q_\alpha$ are of type $E_3$,   with   three axiumbilic separatrices, as shown in Fig. \ref{fig:a345}, top.

\item[ii)] If  $\;\;   \Delta(a,b)  < 0$ and $ a  < 0$, with 
$a\neq -1$, 
 then the axial configurations $\mathbb P_\alpha$ and $\mathbb Q_\alpha$ are of type  $E_4$,
  with   four axiumbilic separatrices and  one parabolic sector, as shown in Fig. \ref{fig:a345}, center.

\item[iii)]  If $\;\; \Delta(a,b)<0$,   $\;\; a > 0 $, then the axial configurations $\mathbb P_\alpha$ and $\mathbb Q_\alpha$ are of type $E_5$, with five axiumbilic separatrices, as shown in Fig. \ref{fig:a345}, bottom.

  \end{itemize}

 \begin{figure}[ht]
 \psfrag{v}{v} \psfrag{N}{N}
   \psfrag{Nv}{{$N(v)$}}  \psfrag{tpm}{{$T_p{\mathbb M}$} }
 \psfrag{npm}{$N_p{\mathbb M}$}
 \psfrag{n1}{$\mathbf{N}_1$} \psfrag{n2}{$\mathbf{N}_2$}
\begin{center}
\includegraphics[scale=0.4]{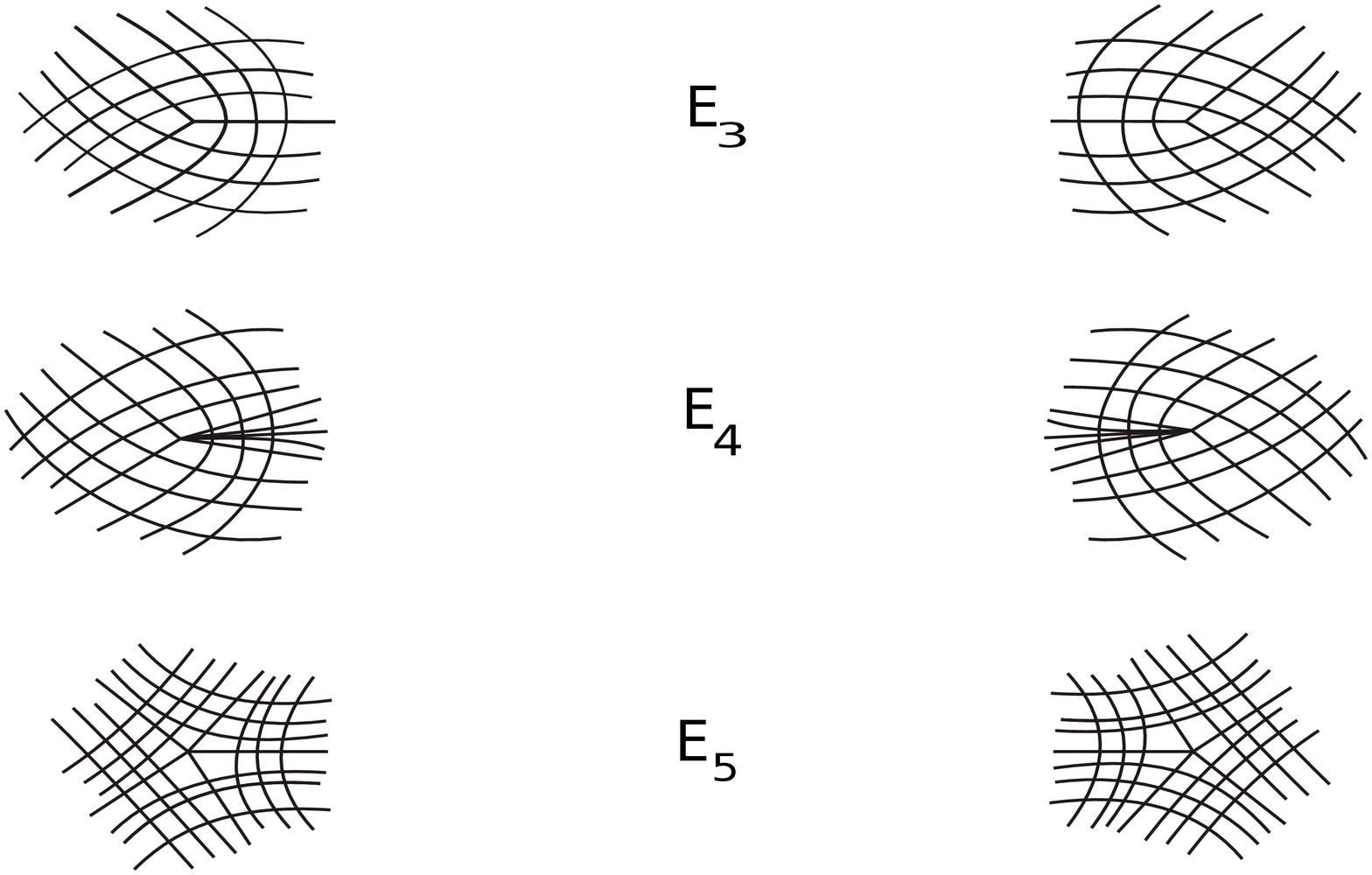}
\caption{ Axial Configurations near points $E_3$, $E_4$ and $E_5$ \label{fig:a345}}
\end{center}
\end{figure}
\end{theorem}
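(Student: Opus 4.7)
The plan is to analyze (\ref{axial.monge}) by the Lie--Cartan projective resolution, reducing the classification of the quartic direction field near the axiumbilic point to a classification of singularities of a smooth vector field on a blown-up surface.

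First, introduce the projective coordinate $p = dy/dx$, with a second chart $q = dx/dy$ covering the direction $p = \infty$. Dividing (\ref{axial.monge}) by $dx^4$, the equation becomes $F(x,y,p) = y(p^4 - 6p^2 + 1) + (ax+by)p(1-p^2) + \tilde H(x,y,p) = 0,$ where $\tilde H$ has order $\geq 2$ in $(x,y)$. The zero set $\mathcal M = \{F = 0\} \subset \mathbb R^2 \times \mathbb R P^1$ is a smooth surface (this is where the transversality hypothesis $a \neq 0$ is first used: it implies $\partial_x F|_{(0,0,p)} = a p (1-p^2) \not\equiv 0$, so the full fiber $\{(0,0)\} \times \mathbb R P^1$ lies in $\mathcal M$ as a regular exceptional divisor). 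The axial line field pulled back to $\mathcal M$ and multiplied by a suitable factor extends to a smooth vector field $X$ on $\mathcal M$ whose integral curves project to axial lines downstairs, with $X$ vanishing precisely at the points of the exceptional fiber that correspond to axial directions.

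Second, locate and classify the singularities of $X$ on the exceptional fiber. A direct computation, parallel to the one in \cite{sotogarcia1}, shows that singularities correspond to roots $p_0$ of a cubic $P(p;a,b)$ whose discriminant, after extracting the factor $(a+1)^2$, agrees with $\Delta(a,b)$ of (\ref{eq:41}). Hence $\Delta < 0$ yields three real singularities on the affine chart $p$, while $\Delta > 0$ leaves one; the behavior at $p = \infty$ is read off from the $q$-chart and supplies, according to the sign of $a$, either an additional hyperbolic saddle ($a > 0$) or a saddle-node/parabolic singularity ($a < 0$), with the value $a = -1$ being the degenerate threshold excluded from case (ii).

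Third, linearize $X$ at each singularity. Under $a \neq 0$ the eigenvalues are real and nonzero, the quotient of eigenvalues depending rationally on $(a,b)$; in the three cases (i), (ii), (iii) the three or four finite singularities are all saddles, and the Hartman--Grobman theorem gives the local phase portrait. Projecting the stable and unstable separatrices of $X$ down to $(x,y)$ produces the axiumbilic separatrices, and the count is: three hyperbolic saddles (case (i)) give the three separatrices of $E_3$; three saddles together with the parabolic sector at $p=\infty$ give the four separatrices plus parabolic sector of $E_4$ (case (ii)); three saddles plus an extra saddle at $p=\infty$ give the five separatrices of $E_5$ (case (iii)). Figure \ref{fig:a345} is then obtained by gluing together these local sectors, taking into account that the four axial directions at each regular point split into the pair $\mathbb P_\alpha$ (principal) and the pair $\mathbb Q_\alpha$ (mean), both governed by the same lifted field $X$.

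The main obstacle is the bookkeeping at the chart change $p \to q = 1/p$: one must show that the resolution behavior at $p = \infty$ is indeed controlled by $\mathrm{sgn}(a)$ and that the exceptional value $a = -1$ produces the degenerate eigenvalue ratio responsible for the exclusion in (ii). A secondary but nontrivial step is identifying $\Delta(a,b)$ in (\ref{eq:41}) as the discriminant of the singularity-determining cubic, since the quantities $I$ and $J$ must emerge naturally from the invariants of that cubic rather than be guessed; this is a computation of resultants that is tedious but mechanical.
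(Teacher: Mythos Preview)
The paper does not prove Theorem~\ref{th:41}: Section~\ref{sec:3} is explicitly a review, and the result is simply quoted from \cite{sotogarcia1,sotogarcia}. Your Lie--Cartan resolution strategy is indeed the method used in those references, so at the level of overall plan you are aligned with the original source.

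The execution, however, contains a concrete error that derails the case analysis. The polynomial locating the singularities of the lifted field on the exceptional fiber is not a cubic. From $F(x,y,p)=y(p^4-6p^2+1)+(ax+by)p(1-p^2)$ one finds
\[
(F_x + pF_y)\big|_{(0,0,p)} \;=\; p\,\bigl[p^{4} - b\,p^{3} - (a+6)\,p^{2} + b\,p + (a+1)\bigr]\;=\;p\,Q(p),
\]
a quintic that factors as $p$ times a \emph{quartic} $Q$. The quantities $I,J$ in \eqref{eq:41} are exactly the classical invariants of this binary quartic, and $I^3-27J^2$ is (up to normalization) its discriminant; that, and not the discriminant of any cubic, is why \eqref{eq:41} has the shape it does. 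Two further claims in your outline then fail. First, there is \emph{no} singularity at $p=\infty$: in the $q$-chart the analogous polynomial has constant term $1$, so the sign of $a$ does not act by inserting or removing a point at infinity. Second, the exceptional value $a=-1$ enters because $Q(0)=a+1$, so at $a=-1$ the fixed root $p=0$ collides with a root of $Q$; this collision, not an eigenvalue degeneration at infinity, is the source of the factor $(a+1)^2$ in $\Delta$. The correct mechanism is that the sign of $\mathrm{disc}(Q)$ decides whether $Q$ has two real roots or four/zero, and the further conditions on $a$ separate the four-root from the zero-root alternative and determine which of the hyperbolic singularities on the fiber are saddles and which are nodes; counting saddles then yields the $3$, $4$, $5$ separatrices of $E_3,E_4,E_5$. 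Your outline would need to be reorganized around the quartic $Q$, with all singularities in the affine $p$-chart, to go through.
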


Figures  \ref{fig:a345} and \ref{fig:d345} illustrate Theorem \ref{th:41}.

 \begin{figure}[ht]
\begin{center}
\includegraphics[scale=0.20]{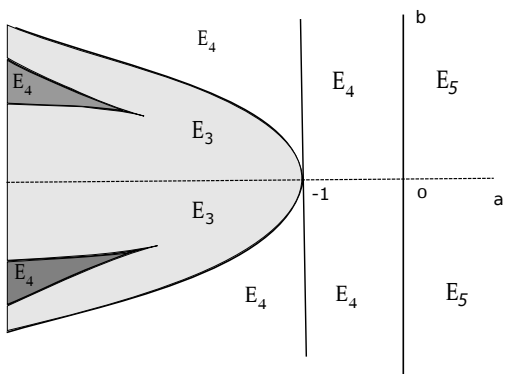}
\caption{ Axiumbilic types  $E_3$, $E_4$ and $E_5$ partitioning the plane $(a, b).$\label{fig:d345} } 
\end{center}
\end{figure}

 \section{  \label{sec:4}  Axial Configurations  near a Critical Point for 
  a Special Family of Mappings} 
 
 In this section the axial configuration of the special family of mappings defined by equation \eqref{eq:familyW} near the critical point will be established.  
 
   Consider the mapping 
\begin{equation}\label{eq:familyW} 
 \alpha^a(u,v)=\alpha(u,v)=(u,uv,v^2,\frac 16 av^3) 
 \end{equation}
 \noindent  and the vector $W=\alpha_u\wedge \alpha_{uv}\wedge \alpha_{vv}$.
  
Define the normal vectors
\; $\mathbf{N}_1=\alpha_u\wedge \alpha_{v}\wedge W$ and $\mathbf{N}_2=\alpha_u\wedge \alpha_{v}\wedge \mathbf{N}_1$.

The first fundamental form of $\alpha$ is given by:

\begin{equation}\label{eq:EFG} \aligned E=& 1+u^2,\;\;\;\;\;\;
F=   uv, \;\;\;\;\;\;
G =  u^2+4v^2+\frac 12 a^2 v^4\endaligned\end{equation}

The normal vectors are given by:

$$\aligned \mathbf{N}_1=&(-\frac 12 a^2v^4-4v^2, 4v+\frac 12 a^2v^3, -2u, -auv)\\
\mathbf{N}_2=&( -a u v^3, a u v^2, -a u^2v-2av^3-\frac 14 a^3v^5-\frac 14 a^3v^7-2a v^5,\\ &2u^2+8v^2+a^2v^4+a^2 v^6+8v^4).
\endaligned$$

The coefficients $\overline{e_1}=\langle \alpha_{uu}, \mathbf{N}_1\rangle$, $\overline{f_1}=\langle \alpha_{uv}, \mathbf{N}_1\rangle$, $\overline{g_1}=\langle \alpha_{vv}, \mathbf{N}_1\rangle$,
 $\overline{e_2}=\langle \alpha_{uu}, \mathbf{N}_2\rangle$, $\overline{f_2}=\langle \alpha_{uv}, \mathbf{N}_2\rangle$, $\overline{g_2}=\langle \alpha_{vv}, \mathbf{N}_2\rangle$ are given by:

 \begin{equation}\label{eq:efg12b} \aligned \overline{e_1}=& 0,\;\;\;
 \overline{f_1}= 4v+\frac 12 a^2 v^3,\;\;\;
\overline{g_1}= -u(4+a^2v^2)\\
 \overline{e_2}=&0,\;\;\;
\overline{f_2}= auv^2,\;\;\;\;\;\;\;\;\;\;\;\;\;\;
\overline{g_2}= \frac 12 av^3(1+v^2)(8+a^2v^2)
 \endaligned\end{equation}

From equations \eqref{eq:EFG} and \eqref{eq:efg12b} it follows that   the functions $\overline{a_0}$
and $\overline{a_1}$  in 
 equation \eqref{eq:dasing},  after multiplication by $-\frac{8}{(1+v^2)|\mathbf{N}_1|^2}$,  are given by:

 $$\aligned
  \overline{a_1}=&[(24-2a^2)v^2-8] u^2+\frac 12  v^4[(a^2-2a)v^2 +16-2a][(a^2+2a)v^2+16+2a]
 \\
\overline{ a_0}=&uv(8+24v^2+a^2v^2+2a^2v^4).
  \endaligned $$

 Therefore the  differential equation of axial lines in the singular surface is written 
 
  \begin{equation}\label{eq:23sing}
\aligned \overline{ {\mathcal G}}(u,v,du,dv)=& [ \overline{a_0}G(EG-4F^2)+ \overline{a_1}F(2F^2-EG)]dv^4 \\
 +& [-8\overline{a_0} EFG+ \overline{a_1} E(4F^2-EG)]dv^3du +\\
  [&-6\overline{a_0} GE^2+ 3\overline{a_1} FE^2]dv^2 du^2+ \overline{a_1}  E^3 dv du^3 + \overline{a_0} E^3 du^4  =0,  \\
   \overline{a_1}=&[(24-2a^2)v^2-8] u^2+\frac 12  v^4[(a^2-2a)v^2 +16-2a][(a^2+2a)v^2+16+2a]
 \\
\overline{ a_0}=&uv(8+24v^2+a^2v^2+2a^2v^4), \;\; E=1+u^2,\; F=uv, \;
G =  u^2+4v^2+\frac 12 a^2 v^4\endaligned
 \end{equation}

 \begin{proposition} \label{prop:lcsurface}  Consider the
mapping 
given by equation \eqref{eq:familyW}
 \noindent  in the plane $M = \R^2$ 
 and  its 
  Lie-Cartan variety $\overline{\mathcal G}(u,v,du,dv)=0$ defined by equation \eqref{eq:23sing}. 
 The projection $\pi:PM\to M$ restricted to $\overline{\mathcal G}^{-1}(0)$ 
  is a regular 
  four-fold covering 
  outside the projective line $\pi^{-1}(0)$.

Outside a neighborhood of the vertical direction $(0,[0:1])$, where 
 the variety $\overline{{\mathcal G}}^{-1}(0)$ 
 has a degenerate
  singular point, 
  it  is the union of two regular surfaces intersecting transversally along the projective line.

\end{proposition}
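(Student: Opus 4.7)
The plan is to prove the two assertions of the proposition separately, both by direct analysis of the quartic \eqref{eq:23sing} in affine charts of the projectivization $PM$.

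For the four-fold covering assertion, I would verify that the ideal $(\overline{a_0},\overline{a_1})$ has no common zero in a punctured neighborhood of the origin in $M$ (for generic $a$). From \eqref{eq:23sing}, $\overline{a_0}=uv\cdot(8+O(v^2))$ forces $u=0$ or $v=0$; along each of these lines, $\overline{a_1}$ is immediately seen to be nonzero away from the origin (on $v=0$ one has $\overline{a_1}=-8u^2$, on $u=0$ one has $\overline{a_1}=\tfrac12 v^4(16-2a)(16+2a)+O(v^6)$). Hence no axiumbilic points occur in this punctured neighborhood, so the ellipse of curvature is a proper ellipse with four distinct vertices, and the quartic $\overline{\mathcal{G}}(u,v,du,dv)=0$ has four distinct real roots in $[du:dv]$ for every $(u,v)\ne(0,0)$. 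As the roots are simple, $\partial_p\overline{\mathcal{G}}\ne 0$ at each preimage, so $\pi$ is a local diffeomorphism at every point of $\overline{\mathcal{G}}^{-1}(0)\setminus\pi^{-1}(0)$, giving the stated covering.

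For the local structure assertion I would work in the chart $p=dv/du$. Using that
\[ \overline{\mathcal{G}}(u,v,1,p)=\overline{a_0}(u,v)R(u,v,p)+\overline{a_1}(u,v)S(u,v,p), \]
with $R(0,0,p)=1$ and $S(0,0,p)=p$, I would divide by the unit $R$ (nonvanishing near the origin) to reduce the defining equation to $H(u,v,p):=\overline{a_0}(u,v)+T(u,v,p)\,\overline{a_1}(u,v)=0$, where $T=S/R$ satisfies $T(0,0,p)=p$. Expansion to lowest order in $(u,v)$ gives $H(u,v,p)=8uv-8pu^{2}+\text{(higher order in }(u,v)\text{)}$, whose $(u,v)$-quadratic part $-8u(up-v)$ is a nondegenerate indefinite quadratic form that factors as a product of the two linearly independent linear forms $u$ and $up-v$ for every finite $p$. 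Therefore $H$ has a Morse--Bott critical submanifold along $\pi^{-1}(0)$ with nondegenerate normal Hessian, and the parametrized Morse lemma yields smooth functions $\Phi_{1}(u,v,p)$, $\Phi_{2}(u,v,p)$, defined near each $(0,0,p_{0})$ with $p_{0}$ finite, vanishing to first order with linearly independent differentials approximated by $du$ and $p_{0}\,du-dv$, so that $H=\Phi_{1}\Phi_{2}$ up to a smooth nonvanishing factor. The zero loci $\Phi_{i}^{-1}(0)$ are then the two regular surfaces meeting transversally along $\pi^{-1}(0)$.

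The main obstacle, and the reason for excluding a neighborhood of the vertical direction $(0,[0:1])$, is that this Morse-type argument fails at $p=\infty$ (equivalently $q=du/dv=0$): in the chart $q$ the coefficient of $q^{0}$ has weighted order $7$ with respect to the natural weights $w(u)=2$, $w(v)=1$, so the $(u,v)$-quadratic part of the reduced equation near $(0,0,0)$ is degenerate and the factorization above breaks down. To finish the proposition I would separately verify in the $q$-chart that the Hessian of the analogous reduction $H$ vanishes at $(0,0,0)$, confirming the degenerate singular point asserted there. A complete local description at this singular point, which is not part of the proposition, would require a weighted blow-up guided by the quasi-homogeneity of the leading jet of $\overline{\mathcal{G}}$ with weights $(w(u),w(v),w(p))=(2,1,-1)$.
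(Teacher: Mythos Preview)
Your proof of the four-fold covering assertion is essentially identical to the paper's: both reduce to checking that $\overline{a_0}$ and $\overline{a_1}$ have no common zero in a punctured neighborhood of the origin, via $\overline{a_0}=0\Rightarrow u=0$ or $v=0$, together with $\overline{a_1}(u,0)=-8u^2$ and $\overline{a_1}(0,v)=\tfrac12 v^4(16-2a)(16+2a)+O(v^6)$. (One minor point: your parenthetical ``for generic $a$'' is overly cautious---using the full factored form $\overline{a_1}(0,v)=\tfrac12 v^4[(a^2-2a)v^2+16-2a][(a^2+2a)v^2+16+2a]$ given in \eqref{eq:23sing}, the conclusion holds for every $a$, including $a=\pm8$.)

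For the second assertion your route genuinely differs from the paper. The paper's proof block in fact stops after establishing the absence of axiumbilic points; the claim that $\overline{\mathcal G}^{-1}(0)$ is, away from $(0,[0:1])$, a union of two smooth surfaces meeting transversally along $\pi^{-1}(0)$ is left implicit and illustrated only through the figures and the subsequent blow-up propositions. Your argument---writing $\overline{\mathcal G}(u,v,1,p)=\overline{a_0}R+\overline{a_1}S$ with $R(0,0,p)=1$, $S(0,0,p)=p$, reducing to $H=\overline{a_0}+T\overline{a_1}$, computing the $(u,v)$-quadratic part $-8u(up-v)$, and invoking the parametrized Morse lemma to obtain a smooth factorization $H=\Phi_1\Phi_2$---supplies exactly the local analysis that the paper omits. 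This buys you a self-contained justification of the transversal two-sheet structure for every finite $p$, and your identification of the weighted-homogeneous obstruction at $q=0$ explains cleanly why $(0,[0:1])$ must be excluded. The paper, by contrast, relies on the later explicit blow-up calculations (Propositions \ref{prop:bh} and \ref{prop:bv2}) to exhibit the structure indirectly.
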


\begin{proof}
As the singular point is of Whitney type it follows that 
it
is isolated and in a punctured neighborhood
$U_0=U\setminus\{0\}$ of $0$ the map 
$\alpha$
is an immersion.
Below it is shown that
there are no axiumbilic points of $\alpha$ in $U_0$.

From equation \eqref{eq:23sing} it follows that
 $$\aligned \overline{a_1}(u,v)=&[(24-2a^2)v^2-8] u^2+\frac 12  v^4[(a^2-2a)v^2 +16-2a][(a^2+2a)v^2+16+2a]
 \\
\overline{ a_0}(u,v)=&uv(8+24v^2+a^2v^2+2a^2v^4)
 \endaligned $$

 Therefore, $  \overline{ a_0}(u,v)=0$ if only if $u=0$ or $v=0$.
  As $\overline{ a_1}(u,0)=-8u^2$ and $\overline{ a_1}(0,v)=
  \frac 12  v^4[(a^2-2a)v^2 +16-2a][(a^2+2a)v^2+16+2a]$ it follows that
  there are no axiumbilic in a punctured neighborhood of $0$.
 \end{proof} 
 
Outside a neighborhood of $(0,[0:1])$ the variety  $\overline{\mathcal G}=0$  is the union of two regular surfaces which  intersect transversally along the projective axis.  
  In Fig. \ref{fig:cm} is sketched the topological type of $\overline{\mathcal G}=0$ near the critical point  $(0,[0:1])$  with a  cut  along the projective axis when $|a|<8$. 
 
 In Fig. \ref{fig:4discos} is shown the topological type of $\overline{\mathcal G}=0$  near the critical point $(0,[0:1])$  when $|a|>8$.

 \begin{figure}[ht]
\begin{center}
\fbox{\includegraphics[scale=0.16]{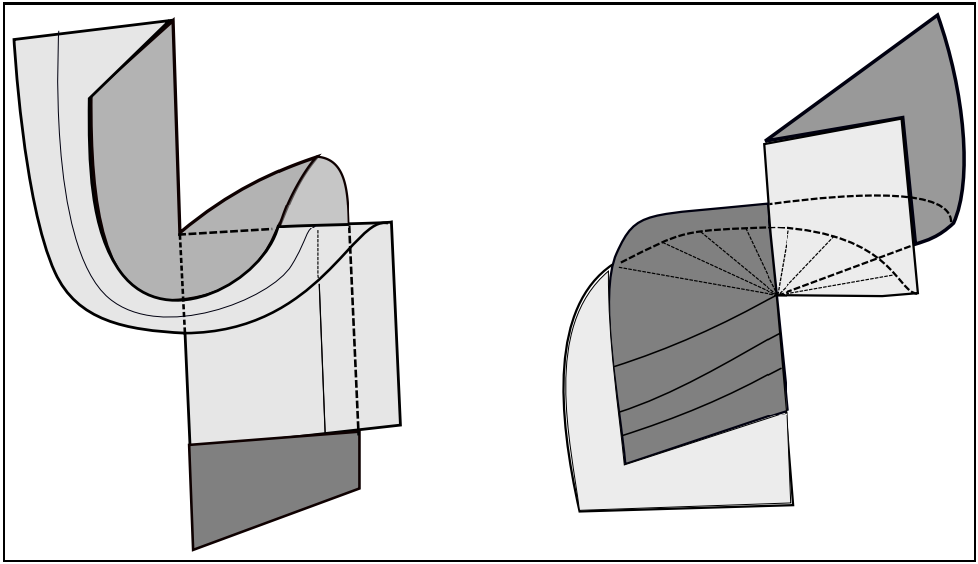}} 
\caption{ The two sheets  of the Lie-Cartan surface $\overline{\mathcal G}=0$ over the critical point of  the map $\alpha(u,v)=(u,uv,v^2, a/6v^3)$  for  $|a|<8$,  union of two topological cylinders near the critical point $q=0$. 
 Gluing the two pictures (left and right) by juxtaposition  along the vertical $q-$axis is recovered the singular surface consisting on  two  crossing topological cylinders, locally one on the  plane  of the drawing and the other  transversal to it.
\label{fig:cm} }
\end{center}
\end{figure}

 \begin{figure}[ht]
\begin{center}
\includegraphics[scale=1.20]{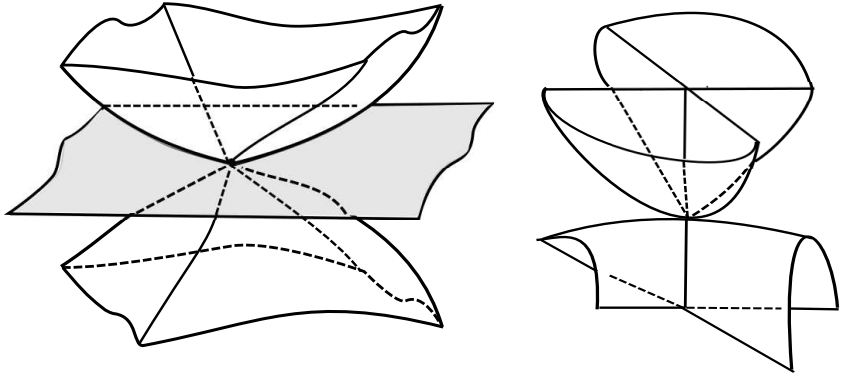}
\caption{ Singular   Lie-Cartan surface  $\overline{\mathcal G}=0$ of the map $\alpha(u,v)=(u,uv,v^2, a/6v^3)$. For $|a|>8$ is the  union of four topological punctured disks. Three of them are near the singular point, the other has the projective line in its closure.\label{fig:4discos}  }
\end{center}
\end{figure}

\begin{proposition}\label{prop:bh}
Consider the planar blowing-up $\psi(u, t) = (u, tu)$ around the origin.
Then, in $(u, t)$-coordinates, Equation \eqref{eq:23sing} restricted to the a small neighborhood of  t-axis has the form:
$$ du^3[8dt+ut^3(2t^2+1)(a^2+16)du]+0(u^2)]=0$$
Therefore, the pull-back of the axial configuration restricted to a small neighborhood
of the t-axis
 is as in Figure \ref{fig:bh}.
Three axial lines are almost vertical 
e one is transversal to axis $t$ and almost horizontal.

\begin{figure}[ht]
\begin{center}
\fbox{
\includegraphics[scale=0.55]{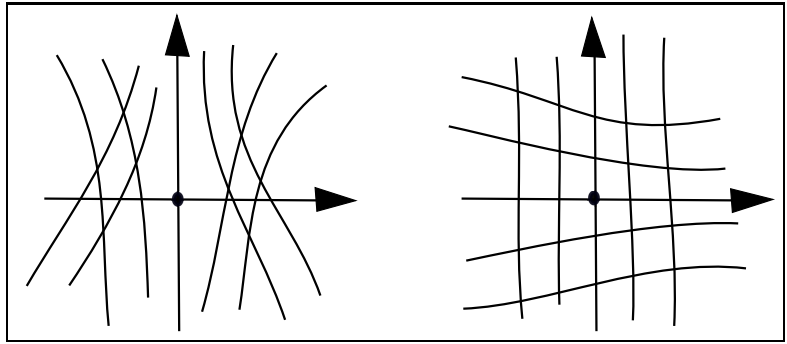}}
\caption{Pull-back of the axial configuration restricted to a small neighborhood
of the t-axis.\label{fig:bh} }
\end{center}
\end{figure}
\end{proposition}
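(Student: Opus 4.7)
The plan is to perform the substitution $v = tu$, $dv = t\,du + u\,dt$ in Equation \eqref{eq:23sing} and to expand everything as a power series in $u$ along the exceptional fiber. In the chart $(u,t)$ the fundamental form coefficients read $E = 1+u^2$, $F = tu^2$, $G = (1+4t^2)u^2 + \tfrac12 a^2 t^4 u^4$, and direct substitution in \eqref{eq:23sing} yields
\[
\overline{a_0} = 8tu^2 + (24+a^2)t^3 u^4 + O(u^6),\qquad \overline{a_1} = -8u^2 + \bigl[(24-2a^2)t^2 + (128-2a^2)t^4\bigr] u^4 + O(u^6).
\]
From these one sees that the five coefficients $A_0 = \overline{a_0}E^3$, $A_1 = \overline{a_1}E^3$, $A_2$, $A_3$, $A_4$ of $du^4,\,du^3 dv,\,du^2 dv^2,\,du\,dv^3,\,dv^4$ in \eqref{eq:23sing} begin at orders $u^2,\,u^2,\,u^4,\,u^4,\,u^6$ respectively.

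Next I would expand $(t\,du + u\,dt)^{k}$ for $k = 2,3,4$, substitute into $\overline{\mathcal G}$, and regroup the result by the monomials $du^{j} dt^{4-j}$ graded by powers of $u$. The coefficient of $u^2 du^4$ is $8t + t\cdot(-8) = 0$, and no other monomial receives any $u^2$-contribution. The sole $u^3$-contribution comes from $A_1\,du^3\cdot u\,dt$, giving $-8u^3\,du^3 dt$. Dividing by $-u^3$ (valid off the exceptional divisor) therefore produces the claimed leading term $8\,du^3\,dt$.

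To identify the next-order term, I would collect all $u^4 du^4$ contributions: $24t + (24+a^2)t^3$ from the $u^4$-part of $A_0$; $t\cdot\bigl[-24 + (24-2a^2)t^2 + (128-2a^2)t^4\bigr]$ from $A_1\cdot t$; $t^2\cdot(-72t - 192 t^3)$ from $A_2\cdot t^2$; $t^3\cdot 8(1+4t^2)$ from $A_3\cdot t^3$; and $0$ from $A_4\cdot t^4$ (which begins at $u^6$). The $t$-terms cancel and the $t^3,t^5$ coefficients combine into $-(a^2+16)\,t^3(1+2t^2)$. Upon division by $-u^3$ this becomes $(a^2+16)\,t^3(2t^2+1)\,u\,du^4$, with all further contributions of order $u^2$ or higher, yielding exactly the normal form stated in the proposition.

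For the qualitative conclusion, at $u = 0$ the reduced equation $8\,du^3\,dt = 0$ has a simple root $dt = 0$ and a triple root $du = 0$. The implicit function theorem applied to the factor $8\,dt + u t^3(2t^2+1)(a^2+16)\,du + O(u^2) = 0$ yields a unique smooth integral curve transversal to the $t$-axis, the almost-horizontal axial line. For the triple root, I would introduce the slope $p = du/dt$: the equation then reads $p^3\bigl[8 + u t^3(2t^2+1)(a^2+16)\,p\bigr] + O(u^2) = 0$, whose zero locus near $p = 0$ consists of three distinct branches through $u = 0$ all tangent to the vertical direction, giving Figure \ref{fig:bh}. The main technical difficulty is the bookkeeping of the $u^4 du^4$ coefficient: four of the $A_i$ contribute nontrivially and the clean factor $(a^2+16)$ emerges only after $t$-cancellation; once this algebraic identity is carried out, the geometric picture follows directly from the multiplicities of the roots at $u = 0$.
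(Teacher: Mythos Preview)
Your computation is correct and carries out in full the ``direct calculations'' that the paper's own proof merely invokes; the only extra ingredient in the paper is a preliminary restriction to $v=0$, where the equation factors as $-16u^{2}\,du\,dv\,(du-u\,dv)(du+u\,dv)=0$, already displaying one horizontal, one vertical and two near-vertical directions before blowing up. Your treatment of the triple root $du=0$ is slightly informal---the leading term $8p^{3}=0$ alone does not separate three branches---but the paper does not address this point more carefully either and simply appeals to the figure.
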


\begin{proof} The differential equation \eqref{eq:23sing} 
restricted
 to $v=0$ is given by:
$-16u^2 dudv (du-udv)(du+u dv)=0$. 
One 
axial direction is horizontal $dv=0$, 
one is vertical $du=0$ 
and two 
are 
 almost vertical $\frac{du}{dv}=u$ and 
 $\frac{du}{dv}=-u$.
 
The proof follows from direct calculations 
leading  to 
$\psi_*(\overline{{\mathcal G}} )$ as stated. 
\end{proof}

 \begin{proposition}\label{prop:bv2}
Consider the planar blowing-up $\vartheta(u,v)=(r^2\sin \theta, r\cos \theta)$ around the origin.
Then, in $(\theta, r)$-coordinates, Equation \eqref{eq:23sing} restricted to the a small neighborhood of the 
$\theta-$axis  in the region $\{r\geq 0\}$ 
has the form:

\begin{equation}\label{eq:bv2}
\aligned
\vartheta_{*}(\overline{{\mathcal G}}  )
 =&dr^3 .[  2\cos \theta \sin \theta((20-a^2)\cos^4\theta\sin^2\theta+4\sin^4\theta+(a^2-56)\cos^8\theta) dr\\
  +& r[  (7a^2-384)\cos^{10}\theta 
 + (260-a^2)\cos^8\theta+(-7a^2+32)\cos^6\theta+(24+2a^2)\cos^4\theta\\
 -&4\cos^2\theta+8]d\theta  ]+ r^2.O(r,\theta,dr,d\theta)
  ={\mathcal E}_r(\theta,r,d\theta,dr)  
 \endaligned
 \end{equation}
 
 The singular points of ${\mathcal E}_r(\theta,r,d\theta,dr)$ 
 in the 
 $\theta$-axis are given by
 $$ 2\cos \theta\sin \theta((20-a^2)\cos^4\theta\sin^2t+4\sin^4\theta+(a^2-56)\cos^8\theta) =0.$$
 Near the $\theta$-axis three axial 
line fields  
 are almost
  horizontal and the other is defined by the 
  following  differential equation:
 
 \begin{equation} \label{eq:sing}  
 {  \aligned \omega= &  2\cos \theta \sin \theta((20-a^2)\cos^4\theta\sin^2\theta+4\sin^4\theta+(a^2-56)\cos^8\theta+r.O(r)) dr\\
 +& r[  (7a^2-384)\cos^{10}\theta+(260-a^2)\cos^8\theta+(-7a^2+32)\cos^6\theta+(24+2a^2)\cos^4\theta\\
 -&4\cos^2\theta+8+r.O(r)]d\theta
 =0 \endaligned}
 \end{equation}

 \end{proposition}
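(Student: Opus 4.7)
The plan is a direct substitution of $\vartheta(u,v)=(r^{2}\sin\theta,r\cos\theta)$ into the quartic equation \eqref{eq:23sing}, collecting terms by powers of $(dr,d\theta)$ and extracting the leading behaviour as $r\to 0$.

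I would first pull back the $1$-forms
\[
du = 2r\sin\theta\,dr + r^{2}\cos\theta\,d\theta,\qquad dv=\cos\theta\,dr-r\sin\theta\,d\theta,
\]
and record the expansions of the coefficients appearing in \eqref{eq:23sing}: $E=1+O(r^{4})$, $F=r^{3}\sin\theta\cos\theta$, $G=4r^{2}\cos^{2}\theta+O(r^{4})$, $\overline{a_{0}}=8r^{3}\sin\theta\cos\theta+O(r^{5})$, and $\overline{a_{1}}=r^{4}[(128-2a^{2})\cos^{4}\theta-8\sin^{2}\theta]+O(r^{6})$.

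Next I would exploit the weighted homogeneity of $\vartheta$. Denote by $A_{0},\dots,A_{4}$ the coefficients of $du^{4},\,du^{3}dv,\ldots,dv^{4}$ in \eqref{eq:23sing}. A short count shows that $A_{k}$ has leading order $r^{3+k}$, and that the coefficient of $dr^{a}d\theta^{4-a}$ in $du^{4-k}dv^{k}$ has $r$-order exactly $8-k-a$ uniformly in the multinomial distribution, because each $du$ carries one extra power of $r$ relative to $dv$ regardless of whether it contributes $dr$ or $d\theta$. Summing, every monomial $A_{k}\,du^{4-k}dv^{k}$ contributes to the coefficient of $dr^{a}d\theta^{4-a}$ in $\vartheta_{*}(\overline{\mathcal{G}})$ at the same $r$-order $11-a$, so a common factor $r^{7}$ divides the pull-back. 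After removing this factor, the leading monomials are $dr^{4}$ (order $r^{0}$) and $dr^{3}d\theta$ (order $r^{1}$), while $dr^{2}d\theta^{2},\,dr\,d\theta^{3},\,d\theta^{4}$ are of orders $r^{2},r^{3},r^{4}$ respectively and are absorbed into the remainder $r^{2}O(r,\theta,dr,d\theta)$. This already produces the factorization $dr^{3}\cdot[\,\cdot\,]$ stated in \eqref{eq:bv2}.

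I would then expand each $A_{k}\,du^{4-k}dv^{k}$ by the multinomial theorem and sum the five contributions to the $dr^{4}$ and $dr^{3}d\theta$ coefficients at orders $r^{7}$ and $r^{8}$ respectively. After simplification using $\sin^{2}\theta+\cos^{2}\theta=1$, the $dr^{4}$ coefficient equals $16\,r^{7}\sin\theta\cos\theta\bigl[(a^{2}-56)\cos^{8}\theta+(20-a^{2})\cos^{4}\theta\sin^{2}\theta+4\sin^{4}\theta\bigr]$, and the $dr^{3}d\theta$ coefficient equals $r^{8}\bigl[(7a^{2}-384)\cos^{10}\theta+(260-a^{2})\cos^{8}\theta+(-7a^{2}+32)\cos^{6}\theta+(24+2a^{2})\cos^{4}\theta-4\cos^{2}\theta+8\bigr]$; since a line-field equation is defined up to a nonzero scalar multiple, dividing by the global $8r^{7}$ produces precisely \eqref{eq:bv2}. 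The singular points on $\{r=0\}$ are then read off from the vanishing of the leading $dr$-coefficient, and the factor $dr^{3}$ yields the three almost horizontal axial line fields, the exceptional one being governed by \eqref{eq:sing}.

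The main obstacle is the $dr^{3}d\theta$ coefficient, which combines five contributions at order $r^{8}$ with substantial cancellation among trigonometric polynomials of degree up to ten; for several values of $k$ one must retain the subleading corrections in $E=1+O(r^{4})$ and $G=4r^{2}\cos^{2}\theta+O(r^{4})$ in order to reach the correct order. Symbolic-algebra verification is the natural way to confirm the numerical coefficients $7a^{2}-384,\,260-a^{2},\,-7a^{2}+32,\,24+2a^{2},\,-4,\,8$.
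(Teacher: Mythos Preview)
Your approach is correct and is exactly what the paper does: its entire proof is the single line ``Direct calculations show that $\vartheta_{*}(\overline{\mathcal G})$ is as stated,'' and your weighted-homogeneity bookkeeping is a clean way to organize that calculation and to see the $dr^{3}$ factorization a priori. One small remark: your own order-count already shows that only the leading $r^{3+k}$ term of each $A_{k}$ contributes to the $r^{7}$ and $r^{8}$ coefficients, so the subleading corrections to $E$ and $G$ you worry about in the last paragraph are in fact never needed; and the overall scalar you quote (dividing by $8r^{7}$) does not quite match between your $dr^{4}$ and $dr^{3}d\theta$ pieces, but this is immaterial for a line-field equation and would be caught by the symbolic check you propose.
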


 \begin{proof} Direct calculations shows that
 $  \vartheta_{*}({\mathcal E} )$ is as stated. \end{proof}

  \begin{proposition} \label{prop:anasing} 
 The differential equation $\omega=0$ given by equation \eqref{eq:sing}
 assuming 
 that 
 $|a| \ne \sqrt{56}$ and $|a|\ne 8$ 
 has  either eight or twelve singular points in the interval $[0,2\pi)$ with
 three or five hyperbolic singular points in the interval
  $(-\frac{\pi}2,\frac{\pi}2) $ contained in the $\theta$-axis.  See Fig. \ref{fig:meio1} and Fig. \ref{fig:zero}.

Moreover,

\begin{enumerate}
\item[i)]
If $|a|< \sqrt{56}$  six  singular points are hyperbolic saddles and  $(\pm \frac{\pi}2,0)$ are hyperbolic nodes.

\item[ii)] If  $  \sqrt{56} <|a|< 8$,  eight 
singular points are hyperbolic saddles and 
the 
 point s$(\theta,r)=(0,0)$ and $(\pi,0)$ and $(\pm \frac{\pi}2,0)$   are  hyperbolic nodes.

\item[iii)] If  $8<|a|$,   ten singular points  are hyperbolic saddles and $(\pm \frac{\pi}2,0)$ are hyperbolic nodes.
\end{enumerate}
\end{proposition}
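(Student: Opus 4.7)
The plan rests on the observation that in Equation \eqref{eq:sing} the coefficient of $d\theta$ carries an overall factor of $r$, so that on the $\theta$-axis $\{r=0\}$ the equation $\omega=0$ reduces to the vanishing of the coefficient of $dr$. Writing $\omega = A(\theta,r)\,dr + B(\theta,r)\,d\theta$ with $B = r\,Q(\theta)+r^2(\cdots)$, one has
\[
A(\theta,0) = 2\cos\theta\sin\theta\,P(\cos^2\theta),
\]
where
\[
P(c) = (a^2-56)c^4 + (a^2-20)c^3 + (24-a^2)c^2 - 8c + 4
\]
and $Q(\theta)$ is the degree-$10$ trigonometric polynomial in $\cos\theta$ appearing in \eqref{eq:sing}. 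The axis singular locus decomposes into the trivial points $\theta=0,\pi$ (where $\sin\theta=0$) and $\theta=\pm\pi/2$ (where $\cos\theta=0$), together with the preimages under $\theta\mapsto\cos^2\theta$ of the zeros of $P$ in $(0,1)$. By the symmetries $\theta\mapsto-\theta$ and $\theta\mapsto\pi-\theta$, each interior root $c_*\in(0,1)$ contributes four values of $\theta\in[0,2\pi)$.

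The next step is the algebraic count of zeros of $P$ in $(0,1)$. The boundary values $P(0)=4$ and $P(1)=a^2-56$ single out $|a|=\sqrt{56}$ as the sign transition, and a direct computation yields the clean identity
\[
P\bigl(\tfrac{1}{2}\bigr) = -\frac{a^2}{16},
\]
so $P(1/2)\le 0$ with equality only at $a=0$. Combining this with Descartes' rule of signs (which permits at most three positive roots when $|a|<\sqrt{56}$ and at most two when $|a|>\sqrt{56}$), with $P'(1)=5a^2-244$, and with a bookkeeping of the quadratic $P''$, one verifies that $P$ has exactly one zero in $(0,1)$ for $|a|<\sqrt{56}$ and exactly two for $|a|>\sqrt{56}$. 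Including the trivial singular points, the axis totals become $2+2+4=8$ and $2+2+8=12$, respectively. The excluded values $|a|=\sqrt{56}$ and $|a|=8$ are exactly those at which a root of $P$ collides with the boundary $c=1$ and at which the eigenvalue $Q(0)=a^2-64$ vanishes, so hyperbolicity fails at $\theta=0,\pi$ there.

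The third step is the linearization of the direction field. The vector field $X=B\,\partial_r-A\,\partial_\theta$ spanning $\ker\omega$ has Jacobian at a singular point $(\theta_0,0)$
\[
DX(\theta_0,0) = \begin{pmatrix} Q(\theta_0) & 0 \\ -A_r(\theta_0,0) & -A_\theta(\theta_0,0) \end{pmatrix},
\]
lower triangular with eigenvalues $Q(\theta_0)$ and $-A_\theta(\theta_0,0)$; hence the singularity is hyperbolic exactly when both are nonzero, a saddle when they have opposite signs, and a node when they agree. Direct evaluation then gives: at $\theta=\pm\pi/2$, both eigenvalues equal $8$, always a node; at $\theta=0,\pi$, the eigenvalues are $a^2-64$ and $2(56-a^2)$, producing a saddle for $|a|<\sqrt{56}$ or $|a|>8$ and a node for $\sqrt{56}<|a|<8$; and at an interior zero $\theta_*$ of $P(\cos^2\theta)$, the chain rule yields $-A_\theta(\theta_*,0)=4\sin^2\theta_*\cos^2\theta_*\,P'(\cos^2\theta_*)$, so the type is controlled by the sign of $Q(\theta_*)\,P'(\cos^2\theta_*)$.

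The main obstacle is this last sign determination: one must show that $Q\cdot P'<0$ at every interior zero of $P$ in all three regimes, so that each such singularity is a saddle. My plan is to express $Q$ as a polynomial $\widetilde{Q}(c)$ in $c=\cos^2\theta$ and compute the resultant $\mathrm{Res}_c\bigl(P(c),\widetilde{Q}(c)\,P'(c)\bigr)$, a polynomial in $a$; verifying it is nonvanishing for $|a|\notin\{\sqrt{56},8\}$ implies that $Q\cdot P'$ has constant sign on the roots of $P$ within each of the three parameter intervals, which can then be fixed by a single representative check. For instance, at $a=0$ the unique interior root is $c_*=1/2$, where $P'(1/2)=-27$ and $Q(\pi/4)=81/4>0$, giving a saddle; analogous checks at $|a|=\sqrt{60}$ and $|a|=9$ settle the other two cases. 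Assembling these sign determinations with the root counts yields the claimed saddle/node tallies $(6,2)$, $(8,4)$, $(10,2)$ in cases (i), (ii), (iii).
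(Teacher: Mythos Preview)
Your strategy is essentially the same as the paper's: reduce the trigonometric singular locus to a polynomial, count roots, linearize the associated vector field, and fix the saddle/node type at the nontrivial singularities via a resultant computation. The paper uses the substitution $t=\tan\theta$ (so the quartic is in $s=t^2$) rather than your $c=\cos^2\theta$; the two are related by $c(1+s)=1$, so the root-counting problems are equivalent. Your linearization, the lower-triangular Jacobian, and the eigenvalue evaluations at $\theta=0,\pi,\pm\pi/2$ all match the paper's computations (the paper records $\det DX(\pm\pi/2,0)=64$ and $\det DX(0,0)=-2(a^2-56)(a^2-64)$, which are your $8\cdot 8$ and $(a^2-64)\cdot 2(56-a^2)$). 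Your resultant plan for the interior singularities is exactly what the paper does: it computes $\mathrm{res}(p,r_a,t)$ and $\mathrm{res}(p,t_a,t)$ separately, finds that the only real parameter values where either vanishes are $a^2=56$ and $a^2=64$, and then fixes the sign on each interval by a single check.

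The one genuine weak spot is your root count for $|a|<\sqrt{56}$. For $|a|>\sqrt{56}$ your argument is complete: $P(0)>0$, $P(1/2)<0$, $P(1)>0$ force at least two roots in $(0,1)$, and Descartes (signs $+,+,-,-,+$) caps the positive roots at two. But for $|a|<\sqrt{56}$ Descartes gives three sign changes, and the data $P(0)>0$, $P(1/2)\le 0$, $P(1)<0$ together with $P'(1)=5a^2-244$ do not by themselves exclude three roots in $(0,1)$; the phrase ``bookkeeping of the quadratic $P''$'' is not yet an argument. The paper avoids this by factoring the quartic explicitly over $\mathbb{R}$: in its variable, $4p_1(s)p_2(s)$ with $p_{1,2}(s)=s^2+s+\tfrac{5}{2}\mp\tfrac{1}{8}\sqrt{a^4-56a^2+1296}$, from which it reads off that $p_1$ always has exactly one positive root and $p_2$ has one only for $|a|>\sqrt{56}$. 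The same trick works in your variable: one checks directly that
\[
P(c)=\bigl(\alpha c^{2}-2c+2\bigr)\bigl(\gamma c^{2}-2c+2\bigr),\qquad
\alpha,\gamma=\tfrac{1}{2}\Bigl(10-\tfrac{a^{2}}{2}\Bigr)\mp\tfrac{1}{4}\sqrt{a^{4}-56a^{2}+1296},
\]
so $\alpha\gamma=a^2-56$ and $\alpha+\gamma=10-a^2/2$; analyzing the two quadratic factors then gives the exact count without any case-by-case sign chasing. I would replace the Descartes/$P''$ sketch by this factorization; it is both shorter and mirrors the paper's proof.
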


\begin{proof}
The singular points of $\omega=0$ are given  by
$$r=0, \;\;\; \cos \theta \sin \theta[(20-a^2)\cos^4\theta\sin^2\theta+4\sin^4\theta+(a^2-56)\cos^8\theta]=0.$$

Writing this equation using the  relations 
 $t=\tan\theta, \;\ \cos\theta=\frac{1}{\sqrt{1+t^2}},\;\  \sin\theta=\frac{t}{\sqrt{1+t^2}}$
  it follows that it  is equivalent to:

$$\aligned p(t)=& t[ 4t^8+8t^6-(a^2-24)t^4-(a^2-20)t^2+a^2-56],\\
  t=& \tan \theta \;\;\;\text{ and} \;\;\; \theta=\pm \frac{\pi}2.\endaligned$$
 
The polynomial $p(t)$ has the following factorization:
 
$$\aligned p(t)=&4tp{_1} (t^2)p{_2} (t^2),\\
           p{_1}(t) =& t^2 + t + \frac{5}{2} - \frac{1}{8}\sqrt[2]{a^4 - 56a^2 + 1296},\\ 
       p{_2}(t) =& t^2 + t + \frac{5}{2} + \frac{1}{8}\sqrt[2]{a^4 - 56a^2 + 1296}.
       \endaligned$$

The polynomial $p_1$ always has two   real simple roots, one positive and the other negative.

The polynomial $p_2$ has a positive root for $|a|>\sqrt{56}$ 
and for $|a|=\sqrt{56}$, $p_2(0)=0$ and for $|a|<\sqrt{56}$ the roots of $p_2$ are negative or complex.

So it follows that:

\begin{enumerate}
\item[i)] For $|a|<\sqrt{56}$, $\omega=0$ has three singular points in the interval $(-\frac{\pi}2,\frac{\pi}2).$
\item[ii)] For $|a|>\sqrt{56}$ and $|a|\ne 8$, $\omega=0$ has five singular points in the interval $(-\frac{\pi}2,\frac{\pi}2).$
\end{enumerate}

The differential equation $\omega=0$  
has the same solution curves as
the vector field $X=\mathbf{P}\frac{\partial}{\partial\theta}+\mathbf{Q} \frac{\partial}{\partial\theta}$, where

\begin{equation}\label{eq:campoX}
\aligned \mathbf{P}(\theta,r)= &2\cos \theta \sin \theta[(20-a^2)\cos^4\theta\sin^2\theta+4\sin^4\theta+(a^2-56)\cos^8\theta+r.O(r)] =P(\theta)+r.O(r) \\
\mathbf{Q}(\theta,r)=& -r[  (7a^2-384)\cos^{10}\theta+(260-a^2)\cos^8\theta+(-7a^2+32)\cos^6\theta+(24+2a^2)\cos^4\theta\\
-&4\cos^2\theta+8+r.O(r)] 
 =-r[Q(\theta)+r.O(r)]
\endaligned
\end{equation}

The jacobian of $DX(\theta,0)$  at the singular point  $(\theta,0)$, $t=\tan\theta$, is given by: 
$- P^\prime(\theta)Q(\theta)=\frac{1}{(1+t^2)^{10}}r_a t_a$,
where

$$ \aligned r_a=& -[8t^{10} +36t^8+ (2a^2+88)t^6+(160-a^2)t^4+(420-9a^2)t^2+a^2-64]\\
t_a=&2[-4t^{10}+12t^8+(5a^2-64)t^6+(2a^2-20)t^4+(564-12a^2)t^2+a^2-56]\endaligned$$

At $\theta=\pm \frac{\pi}2$ the jacobian of $DX(\pm\frac{\pi}2,0)$ is always equal to $64$
and at $\theta=0$ is given by:$ -2(a^2-56)(a^2-64).$

Evaluation of the resultants of polynomials, abbreviated by {\it res},
 give 
that

$$\aligned \text{res}(p(t),r_a,t)=&-1073741824(a^2-64)(5a^2-243)^2(a^2-36)^{12}\\
 \text{res}(p(t),t_a,t)=& 549755813888 (a^2-56)^5(5 a^2-243)^2 (1296-56 a^2+a^4)^4
 \endaligned$$

The values $a^2=36 $ and $a^2=\frac{243}5$ correspond to double complex roots, while
$a^2=64 $ and $a^2=56$ correspond to double real roots.

For $|a|>8$ the sign of $r_a$ in the roots of $p(t)=0$ is negative, while for  $|a|<8$ this sign is negative.

Therefore all singular points of $X$ different  from $(0,0)$ are hyperbolic saddles.
The point
 $(0,0)$ is a hyperbolic node for $\sqrt{56}<|a|<8$ and hyperbolic saddle when
$|a|<\sqrt{56}$ or $|a|>8$.
\end{proof}

\begin{proposition} \label{prop:res} 
Consider the planar blowing-up $\vartheta(u,v)=(r^2\sin \theta, r\cos \theta)$ around the origin.
Then, in $(\theta, r)$-coordinates, the resolution of the axial configuration is as shown in 
Fig. \ref{fig:meio1}  and Fig. \ref{fig:zero}.
\end{proposition}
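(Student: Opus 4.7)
The strategy is to assemble the local analyses produced in Propositions \ref{prop:bh}, \ref{prop:bv2} and \ref{prop:anasing} into a global picture of the pulled-back foliation on a neighborhood of the exceptional divisor $\{r=0\}$, and then push the configuration down to the $(u,v)$-plane.

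First, I would separate the four axial line fields into two groups. By Proposition \ref{prop:bv2}, three of the four fields are almost horizontal near the $\theta$-axis: they never vanish on $\{r=0\}$, so in a tubular neighborhood of the exceptional divisor they are non-singular foliations transverse to the $\theta$-axis, and their leaves project onto three of the four axial curves passing through each point. The behavior of these three foliations is therefore trivial to describe (they contribute the three ``almost vertical'' families seen near the critical point after projection, as already observed in Proposition \ref{prop:bh} on the other blow-up chart). The delicate field is the one defined by the one-form $\omega=0$ of equation \eqref{eq:sing}.

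Next, I would describe the phase portrait of $\omega=0$ case by case using Proposition \ref{prop:anasing}. For each value of $|a|$ in the three intervals $|a|<\sqrt{56}$, $\sqrt{56}<|a|<8$, $|a|>8$, the number and type (saddle or node) of singular points on the $\theta$-axis are known and all of them are hyperbolic. By Hartman--Grobman each singularity has a local phase portrait topologically determined by its linear part, so one can insert the standard saddle/node pictures at each singular point. Then I would use the fact that the exceptional divisor $\{r=0\}$ is itself an invariant curve of the vector field $X=\mathbf{P}\,\partial_\theta+\mathbf{Q}\,\partial_r$ in \eqref{eq:campoX} (since $\mathbf{Q}(\theta,0)=0$). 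This forces one separatrix of each hyperbolic saddle to lie along the $\theta$-axis, and fixes the position of the remaining separatrix (transverse to the axis, inside $\{r>0\}$). Similarly for the nodes, one eigenspace is tangent to the axis.

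Now I would glue the local pictures. Between two consecutive singularities on the $\theta$-axis, the field $X$ has no zero, so its orbits in $\{r>0\}$ form a regular transverse flow box bounded by separatrices of the adjacent singularities; the resulting heteroclinic connections (all taking place on the exceptional divisor) determine the combinatorial structure of the resolution. In each of the three ranges of $|a|$ this yields precisely the patterns drawn in Figures \ref{fig:meio1} and \ref{fig:zero}. Finally, I would superimpose on this picture the three almost horizontal foliations of the first paragraph, to obtain the full resolution of the axial configuration.

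The main obstacle is the last step: verifying that the heteroclinic connections along the exceptional divisor are exactly those indicated in the figures, and that no separatrix of a saddle escapes into $\{r>0\}$ before reaching the next singularity. Because $\{r=0\}$ is invariant and all singularities on it are hyperbolic with one eigendirection tangent to the axis, this reduces to a sign inspection of $\mathbf{P}(\theta,0)=P(\theta)$ on each interval between consecutive zeros, which is handled by the resultant computations already carried out in Proposition \ref{prop:anasing}. Nothing essentially new is required; the proposition is a packaging result, and its proof is a careful bookkeeping of information already present in the preceding three propositions.
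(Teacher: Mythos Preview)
Your proposal is correct, but the paper takes a somewhat different route. Rather than assembling the phase portrait directly from Hartman--Grobman at each singularity and then gluing via sign inspection of $P(\theta)$, the authors anchor the argument at the special value $a=0$, where $\alpha(u,v)=(u,uv,v^2,0)$ is the classical Whitney map whose resolution is already known from the literature on principal and mean curvature lines (they cite \cite{ggs}, \cite{sotogarcia1}, \cite{sotogarcia}). From this base point they invoke a continuation argument: since by Proposition~\ref{prop:anasing} all singularities on the $\theta$-axis are hyperbolic for every $|a|<\sqrt{56}$, no bifurcation can occur along the way, and the topological picture in Fig.~\ref{fig:meio1} (left) persists throughout that range. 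The other two ranges $\sqrt{56}<|a|<8$ and $|a|>8$ are then handled the same way, supplemented by the change in topology of the Lie--Cartan variety (pair of cylinders versus four disks, Figs.~\ref{fig:cm} and~\ref{fig:4discos}).

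Your direct approach has the advantage of being self-contained: it does not rely on prior knowledge of the $a=0$ resolution. The paper's continuation argument, on the other hand, sidesteps the bookkeeping you flag as the ``main obstacle'' --- once the picture is known at one parameter value, hyperbolicity transports it for free. One small point you may wish to sharpen: determining the full sector structure in $\{r>0\}$ (which separatrices are attracting and which repelling in the $r$-direction) requires not only the sign of $P(\theta)$ along the axis but also the sign of $Q(\theta)$ at each singularity; this is implicit in your invocation of Hartman--Grobman, but worth making explicit.
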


 \begin{figure}[ht]
\begin{center}
\fbox{\includegraphics[scale=0.40]{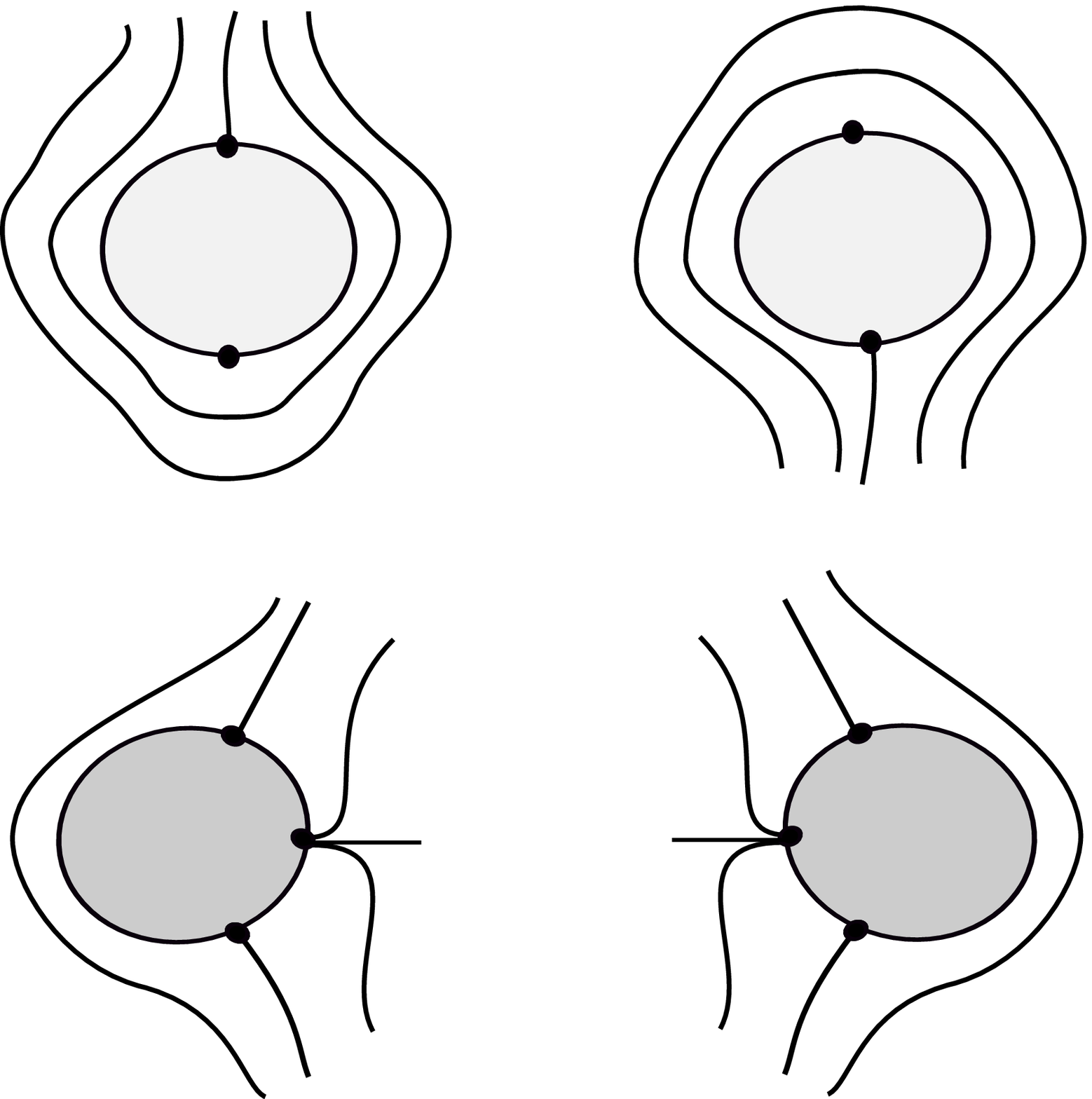}}
 \fbox{\includegraphics[scale=0.40]{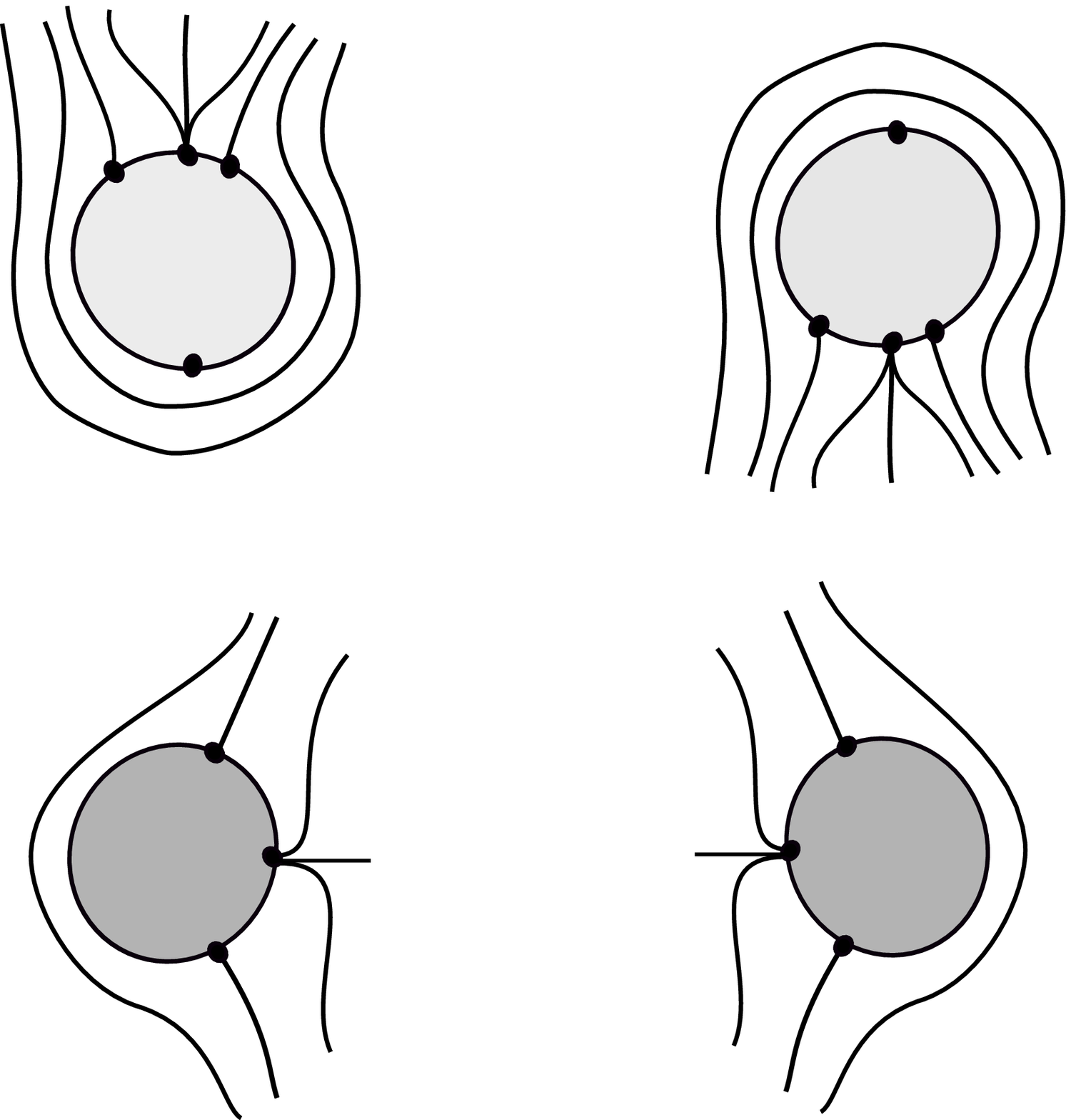}}
\caption{ Singular   points of index $1/2$. Left, with
  $|a|<\sqrt{56}$ and Right, with $8>|a|>\sqrt{56}.$ \label{fig:meio1}}
\end{center}
\end{figure}

 \begin{figure}[ht]
\begin{center}
\fbox{\includegraphics[scale=0.45]{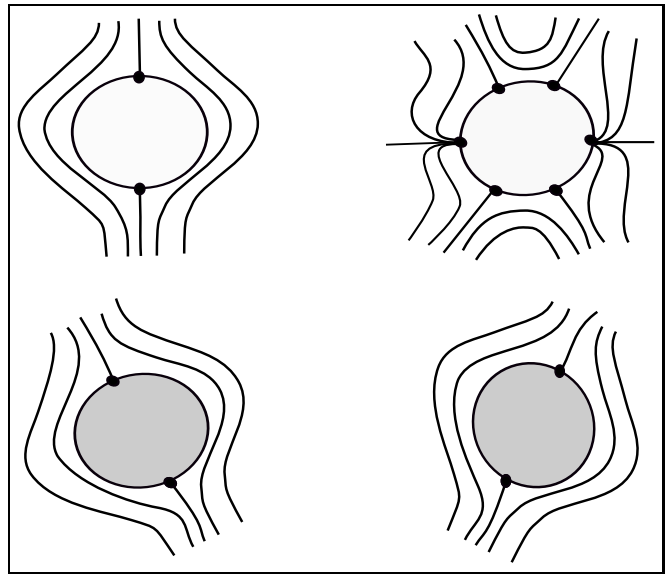}}
{\caption{ Singular   points of index $0$, $|a|>8$.  Behavior of the axial configuration in each topological disk.  \label{fig:zero}  }}
\end{center}
\end{figure}
 
\begin{proof} For $a=0$ the map $\alpha(u,v)=(u,uv,v^2,0)$ is the Whitney stable map.
In this case the axial configuration is given by the principal lines and the mean curvature lines. See  \cite{ggs}, \cite{sotogarcia1}, \cite{sotogarcia}. The Lie-Cartan 
variety  \eqref{eq:23sing}
is a pair of cylinders intersecting along the projective line. See Fig. \ref{fig:cm}. 
 The resolution is as shown in Fig. \ref{fig:meio1}, left.
By continuation, for $|a|<\sqrt{56}$ there are no bifurcation in the resolution. The induced differential equation has  three hyperbolic saddles in the interval  $(-\frac{\pi}2,\frac{\pi}2) $ and  other three hyperbolic saddles in the interval $( \frac{\pi}2,\frac{3\pi}2)$. The points $(\pm \frac{\pi}2,0)$ are hyperbolic nodes. 
 
For $\sqrt{56}<|a|<8$ the Lie-Cartan surface is still a pair of cylinders, but the induced differential equation   
has  four hyperbolic saddles in the interval $(-\frac{\pi}2,\frac{\pi}2)$ and 
$\theta=0$ is a hyperbolic node.  Also it has four hyperbolic saddles in the interval $(\frac{\pi}2,\frac{3\pi}2)$ and the points $(\pm \frac{\pi}2,0)$ are hyperbolic nodes.  See Fig. \ref{fig:meio1}, right.

For $|a|>8$ the Lie-Cartan 
variety 
is the union of four topological disks, see Fig. \ref{fig:4discos} and the induced differential equation   
has  five hyperbolic saddles in the interval $(-\frac{\pi}2,\frac{\pi}2)  $  and five hyperbolic saddles in the interval 
$(\frac{\pi}2,\frac{3\pi}2)$.  The points $(\pm \frac{\pi}2,0)$ are hyperbolic nodes.  See Fig. \ref{fig:zero}.
\end{proof}

 \begin{proposition} \label{prop:aco}
 Consider the map $\alpha(u,v)=(u,uv,v^2,\frac 16 a v^3)$ which has a Whitney singularity at $(0,0)$.

For $|a|<8$ the axial configuration has index $1/2$ at $(0,0)$ and when $|a|<\sqrt{56}$ the axial configuration is as shown in   of Fig. \ref{fig:acm} (left)   and for $  \sqrt{56}< |a|<8$ the axial configuration is as shown in  Fig. \ref{fig:acm}, (right).

 \begin{figure}[ht]
\begin{center}
\fbox{
\includegraphics[scale=0.40]{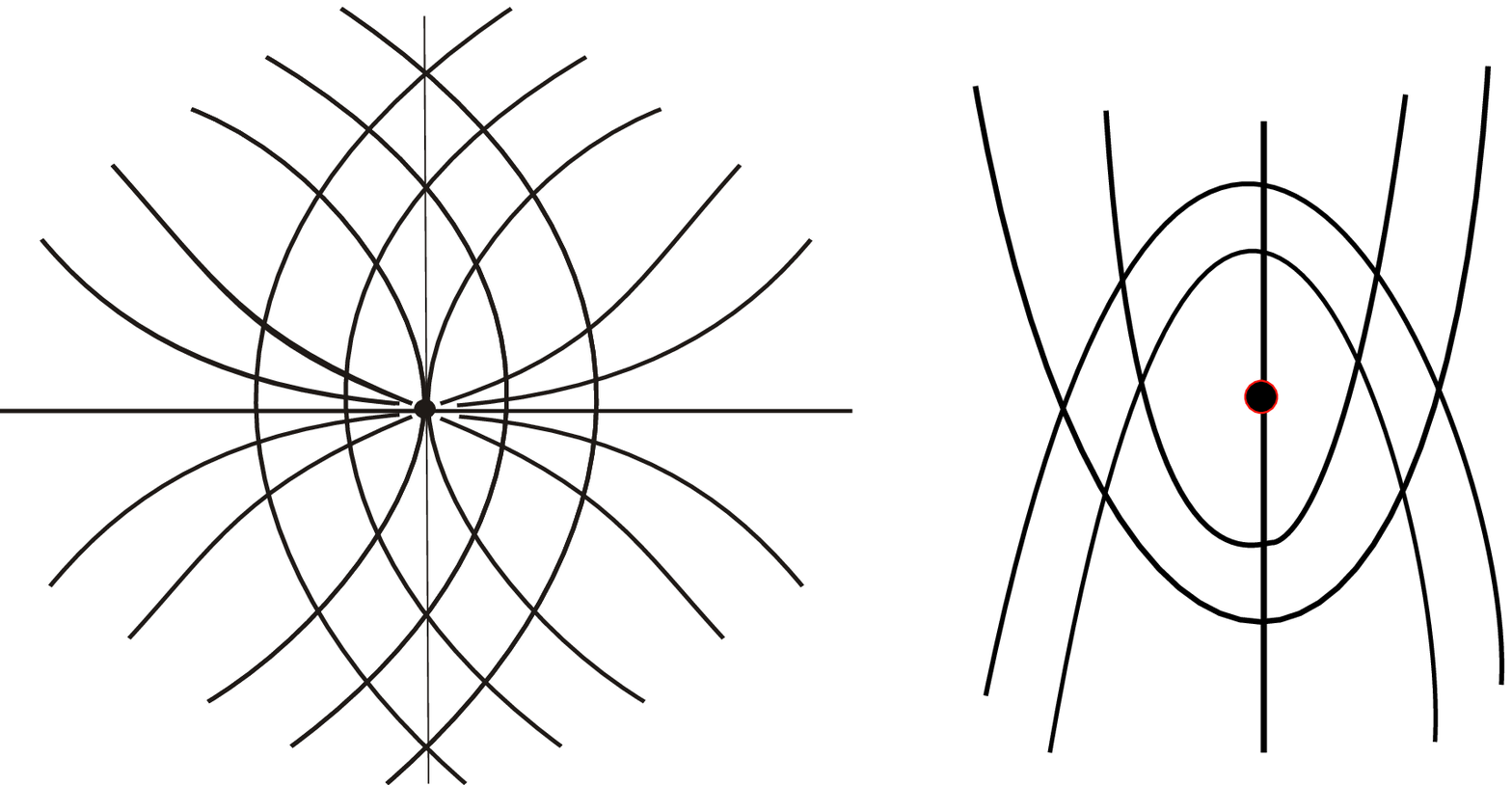}}
\fbox{\includegraphics[scale=0.40]{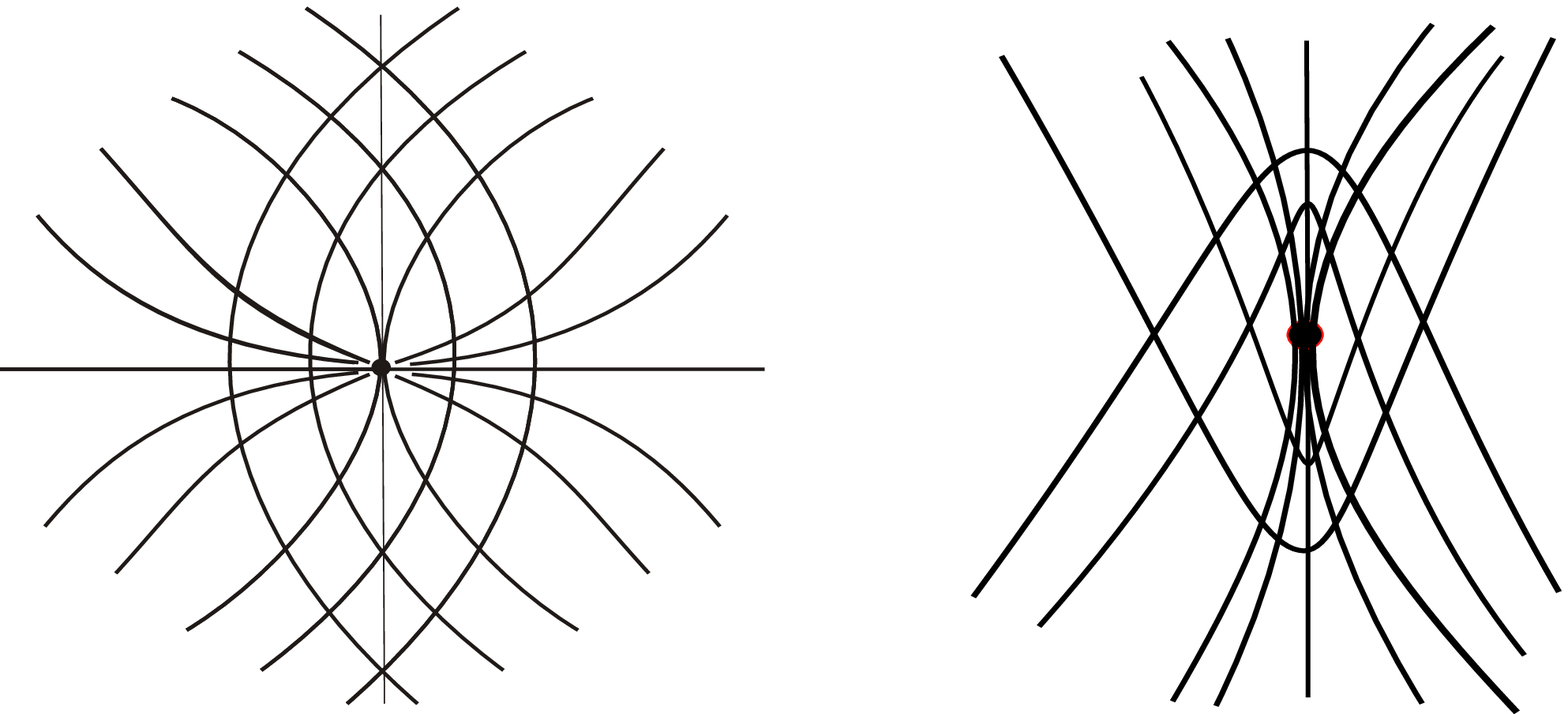}}
\caption{ Axial Configurations near a critical  point of index $\frac 12$.
Left for $|a|<\sqrt{56}$    and right for $\sqrt{56}< |a|< 8$.\label{fig:acm} }
\end{center}
\end{figure}

For $|a|>8$ the axial configuration has index $0$ at $(0,0)$ and it is as shown in Fig. \ref{fig:aco}.
 \begin{figure}[ht]
\begin{center}
\fbox{
\includegraphics[scale=0.55]{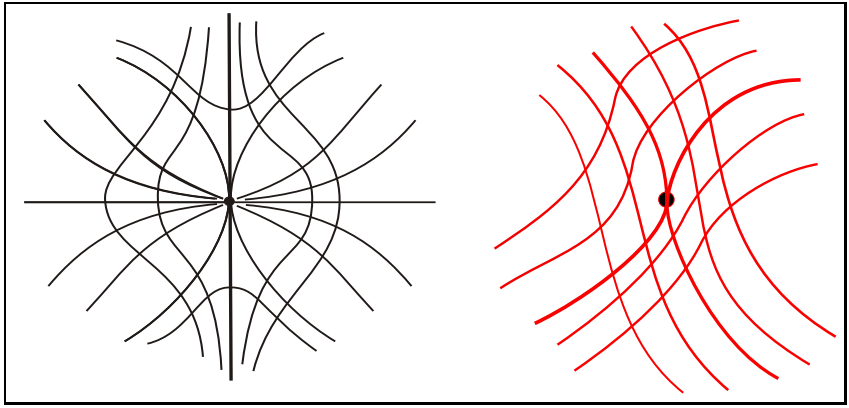}}
\caption{ Axial Configurations near a singular point of index $0$  ( $|a|>8$).\label{fig:aco} }
\end{center}
\end{figure}

\end{proposition}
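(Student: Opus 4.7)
The plan is to reconstruct the axial configuration on $M$ by pushing forward the resolved phase portraits obtained in Proposition \ref{prop:res} through the blowing-up $\vartheta(u,v)=(r^2\sin\theta, r\cos\theta)$. By Proposition \ref{prop:lcsurface} the critical point at $(0,0)$ is the unique singularity of the axial line fields $\mathbb P_\alpha$ and $\mathbb Q_\alpha$ in a punctured neighborhood of the origin, so its local topological type is entirely determined by the dynamical picture in a neighborhood of the exceptional divisor.

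First I would record, for each of the three parameter regimes, the combinatorial data from Proposition \ref{prop:anasing}: how many hyperbolic saddles appear on the $\theta$-axis in $(-\pi/2,\pi/2)$ and in $(\pi/2,3\pi/2)$, and the type (node vs.\ saddle) of the distinguished points $\theta=0,\pm\pi/2,\pi$. Each hyperbolic saddle carries four separatrices; two of them lie along the exceptional divisor (connecting consecutive singularities) and the two transverse ones project through $\vartheta$ to axial separatrices on $M$ emanating from the critical point. Each hyperbolic node attracts a whole sector of trajectories from both sides and thereby identifies a preferred tangent direction along which many axial branches accumulate.

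Next, the data along the direction $\theta=\pm\pi/2$ (corresponding to the horizontal axis $v=0$, where $\vartheta$ is degenerate) would be controlled separately using Proposition \ref{prop:bh}: there the pull-back via $\psi(u,t)=(u,tu)$ shows that, along $v=0$, three axial directions are essentially vertical and one is essentially horizontal, matching the nodal behavior at $(\pm\pi/2,0)$ in the $(\theta,r)$-resolution. Gluing this information with the saddle data on the rest of the $\theta$-axis yields the complete picture; when $|a|<8$ the two sheets of the Lie-Cartan surface (Figure \ref{fig:cm}) are glued transversally along the projective axis, producing a single-point singularity with index $1/2$, and when $|a|>8$ the four punctured disks (Figure \ref{fig:4discos}) contribute independently to give an index-$0$ singularity.

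The index count itself follows from measuring the rotation of the line field along a small positive loop around the origin, using that $\vartheta$ is a ramified double cover of the disk: such a loop lifts to a path covering the exceptional $\theta$-circle only halfway, so by a Poincar\'e--Hopf count on the resolution the axial index equals $1/2$ of the rotation computed on the exceptional divisor. The main obstacle I expect is the sheet bookkeeping: each point of $M\setminus\{0\}$ carries four axial directions and the resolution separates them across the four sheets of $\overline{\mathcal G}^{-1}(0)$, so I would need to verify that the saddle separatrices of types $E_3,E_4,E_5$ pair up correctly across sheets to reproduce the separatrix skeletons in Figures \ref{fig:acm} and \ref{fig:aco}, and in particular that the change of type of the equilibrium $\theta=0$ at $|a|=\sqrt{56}$ is exactly the transition between the $E_3$-style and $E_4$-style pictures in Figure \ref{fig:acm}.
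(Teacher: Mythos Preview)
Your approach is essentially the paper's: the paper's entire proof is the single sentence ``Follows from Proposition \ref{prop:res} performing the blowing-down of the resolution of the axial lines shown in Figs.\ \ref{fig:meio1} and \ref{fig:zero}.'' Your plan to read off the separatrix skeleton from the saddle/node data of Proposition \ref{prop:anasing}, supplement it near $\theta=\pm\pi/2$ with Proposition \ref{prop:bh}, and then collapse the exceptional divisor is exactly this blowing-down, only spelled out in more detail.

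One point to correct: the weighted blow-up $\vartheta(\theta,r)=(r^2\sin\theta,\,r\cos\theta)$ restricted to $\{r\ge 0,\ \theta\in[0,2\pi)\}$ is \emph{not} a ramified double cover of the punctured disk; it is a bijection onto $\mathbb R^2\setminus\{0\}$ (check the four cardinal points). A small simple loop around $(0,0)$ in the $(u,v)$-plane lifts to a full turn of the $\theta$-circle, not half a turn, so your ``half the rotation'' heuristic for the index is off. This does not damage the main argument: the indices $1/2$ and $0$ can be read directly from the number and arrangement of hyperbolic sectors you have already catalogued (six saddles plus two nodes, eight saddles plus four nodes, ten saddles plus two nodes on the exceptional circle, respectively), or simply inherited from the blown-down pictures, which is all the paper does.
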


\begin{proof} Follows from Proposition \ref{prop:res} 
performing 
the blowing-down of the resolution of the axial lines shown in Figs. \ref{fig:meio1} and \ref{fig:zero}.
\end{proof}

\begin{proposition} \label{prop:ace} For $\epsilon\ne 0$ small, consider the immersion $\alpha_\epsilon(u,v)=(u,uv,v^2,\epsilon v+\frac a6 v^3)$.
Then it follows that:

\noindent $\bullet$ For $|a|\leq 8$ and $\epsilon\ne 0$ the immersion $\alpha_\epsilon$    has two axiumbilic points.

\noindent $\bullet$   For $a>8$ the   immersion $\alpha_\epsilon$  has four axiumbilic points when $\epsilon >0$ and no axiumbilic points when $\epsilon <0$.

\noindent $\bullet$  For $a<-8$ the   immersion $\alpha_\epsilon$  has four axiumbilic points when $\epsilon <0$ and no axiumbilic points when $\epsilon >0$. 

Moreover, the axial configuration is as described below.

\noindent $\bullet$
For $a>8$, two axiumbilic points are of type $E_3$ and two are of type
 $E_5$.
  See Fig. \ref{fig:ace35}, top.

\noindent $\bullet$  For $a<-8$,  two axiumbilic points are of type $E_3$ and two are of type
 $E_5$. See Fig. \ref{fig:ace35}, bottom.
  
\noindent $\bullet$
For $ |a| < \frac{15}2$ the two axiumbilic points  are of type $E_3$ and the axial configuration is as  shown  in Fig. \ref{fig:ace}, left.

 \noindent $\bullet$ For   $\frac{15}2<|a| <8$ the two axiumbilic points are of type $E_4$ and the axial configuration is as  illustrated  in Fig. \ref{fig:ace}, right.

\begin{figure}[ht]
\begin{center}
\fbox{\includegraphics[scale=0.57]{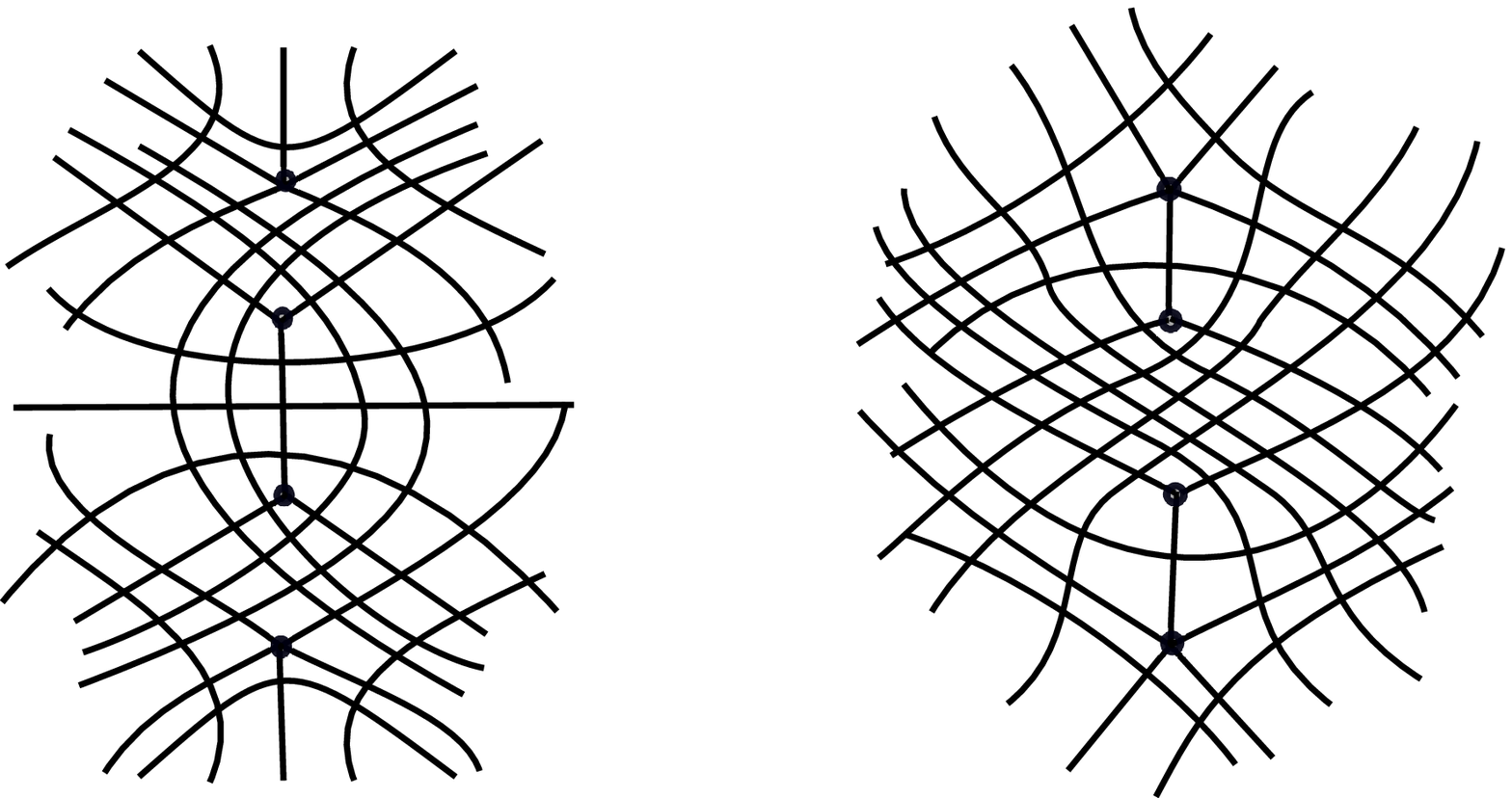}}
 \fbox{\includegraphics[scale=0.59]{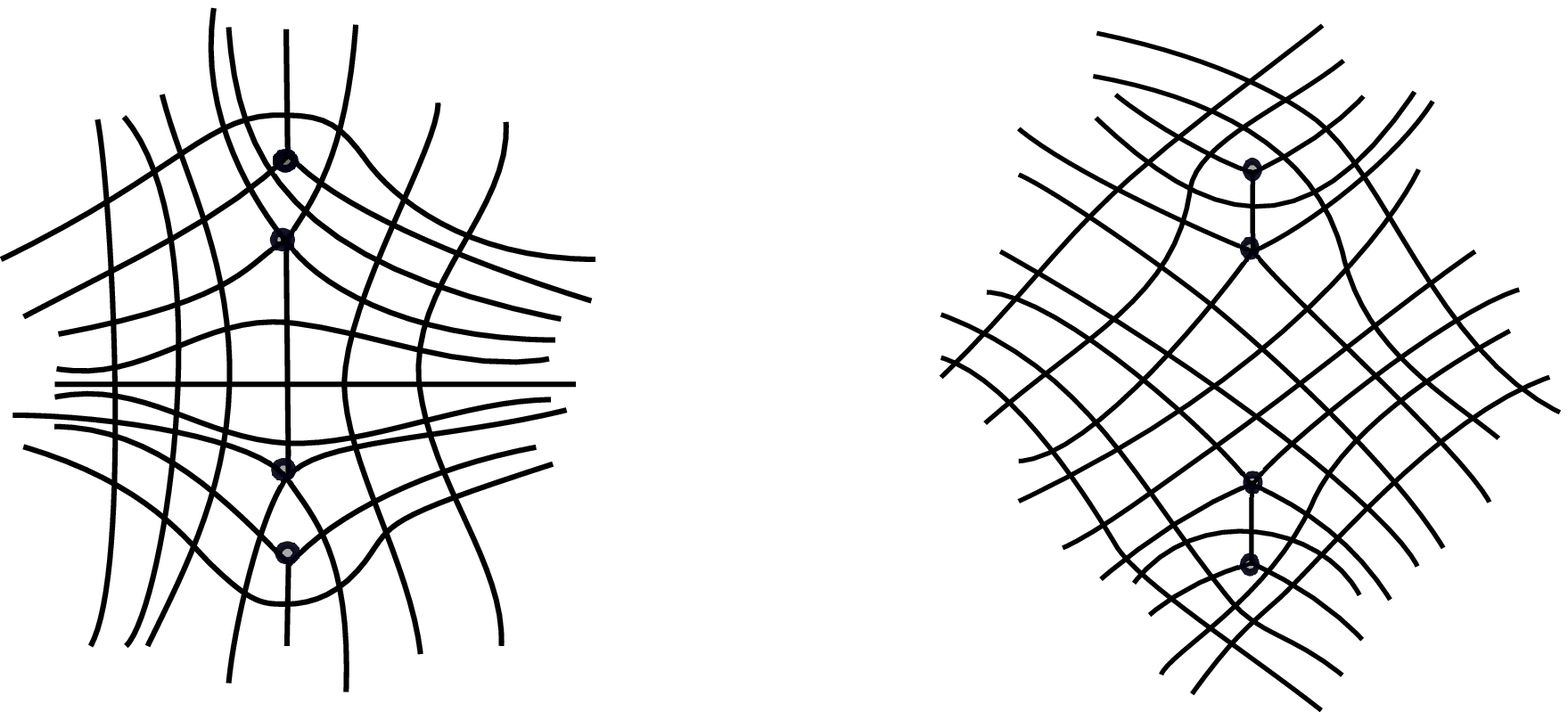}}
\caption{ Axial Configurations near axiumbilic points $E_3$ and $E_5$, branching from index $0$  critical point. Top, $a>8$ and bottom, $a<-8$. \label{fig:ace35} }
\end{center}
\end{figure}

 \begin{figure}[ht]
\begin{center}
\fbox{\includegraphics[scale=0.37]{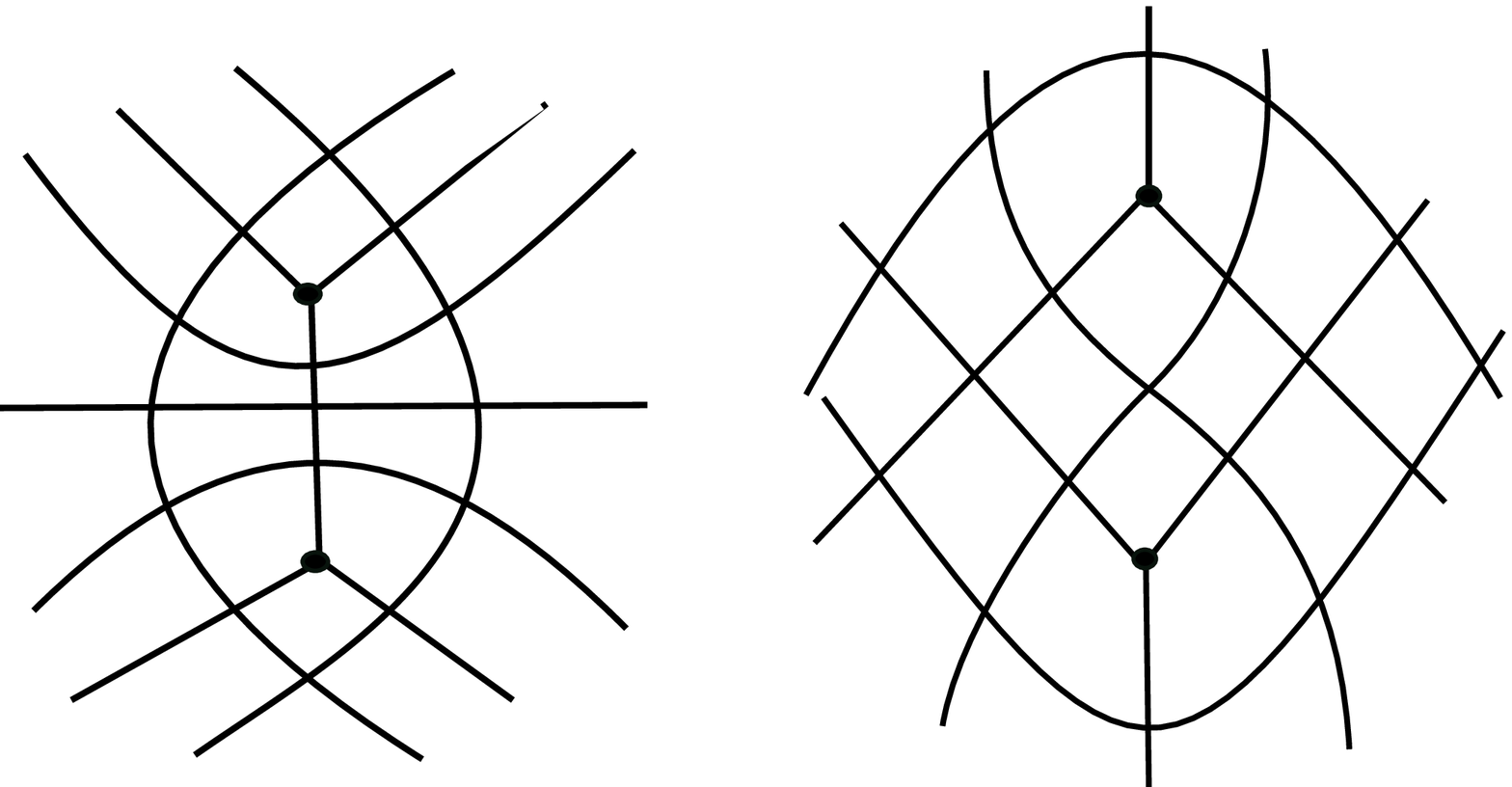}}
\fbox{\includegraphics[scale=0.36]{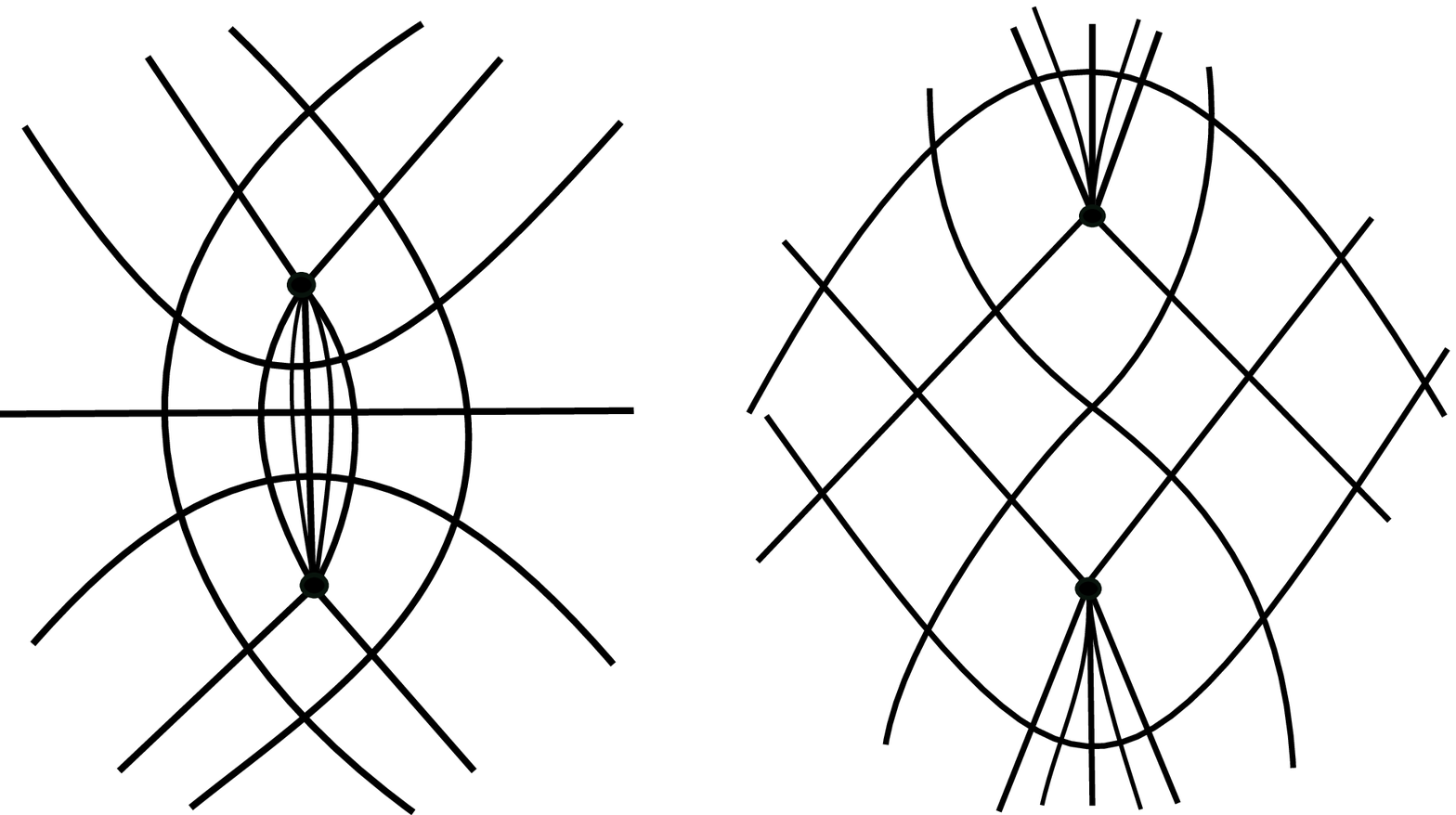}}
\caption{ Axial Configurations near axiumbilic points of types $E_3$, branching from the  index $1/2$ critical point.  Left for $|a| <\frac{15}2$  and
of types $E_4$, right for $\frac{15}2<|a|<8$. \label{fig:ace} }
\end{center}
\end{figure}

 \end{proposition}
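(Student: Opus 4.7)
The plan is to solve the axiumbilic system $\overline{a_0}=\overline{a_1}=0$ for the perturbed immersion $\alpha_\epsilon$ near the origin, count its solutions as $(a,\epsilon)$ vary, and classify the type at each via Theorem~\ref{th:41}. The global pictures in Figures~\ref{fig:ace35} and~\ref{fig:ace} then follow by continuation from Proposition~\ref{prop:aco}: outside a shrinking neighborhood of the origin the axial configuration is that of $\alpha_0$, while the local models at the newly created axiumbilic points replace the critical-point model of Figures~\ref{fig:acm} and~\ref{fig:aco}.

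First I would verify that $\alpha_\epsilon$ is an immersion near the origin for $\epsilon\ne 0$: one has $\alpha_u(0,0)=(1,0,0,0)$ and $\alpha_v(0,0)=(0,0,0,\epsilon)$, which are independent as soon as $\epsilon\ne 0$, so the Whitney singularity is eliminated. Compared with \eqref{eq:EFG} and \eqref{eq:efg12b}, only $G$ and the coefficients $\overline{f_i},\overline{g_i}$ pick up explicit $\epsilon$-corrections (through $\alpha_v$ and $\alpha_{vv}$). Substitution into the formulas of Proposition~\ref{eq.dif.axial} produces $\overline{a_0}$ and $\overline{a_1}$ as polynomials in $(u,v,\epsilon,a)$ that specialize to \eqref{eq:23sing} at $\epsilon=0$. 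Since $\overline{a_0}$ factors at $\epsilon=0$ as $uv$ times a unit near the origin, one may split the axiumbilic locus into a $u$-near-zero branch and a $v$-near-zero branch, and on each branch reduce $\overline{a_1}=0$ (by resultant elimination or a direct implicit-function argument) to the vanishing of a single polynomial $\Phi(v;\epsilon,a)$.

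Counting real roots of $\Phi$ is then a matter of analyzing its leading behavior in $(v,\epsilon)$. Using $\overline{a_1}(0,v)\sim(128-2a^2)v^4+O(v^6)$ from \eqref{eq:23sing}, the sign of $64-a^2$ controls the outcome: for $|a|<8$ the perturbation produces two axiumbilic points that persist for either sign of $\epsilon$; for $|a|>8$ two additional pairs branch off only when $\epsilon a$ has the sign for which the quartic acquires its additional real root, accounting for the creation/annihilation dichotomy of the second and third bullets. The threshold $|a|=8$ is exactly where the axial index drops from $1/2$ to $0$ in Proposition~\ref{prop:aco}, which is consistent with the index-sum rule applied to the branching axiumbilics.

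Finally, at each axiumbilic point $p_*=p_*(\epsilon,a)$ I would bring $\alpha_\epsilon$ into the Monge form $(x,y,R(x,y),S(x,y))$ of Proposition~\ref{prop.axial} by an orthonormal change in $\mathbb R^4$ together with an affine reparametrization of the source, compute $j^3R(p_*)$ and $j^3S(p_*)$, extract the invariants $\tilde a(\epsilon,a),\tilde b(\epsilon,a)$ in \eqref{axial.monge}, and apply Theorem~\ref{th:41}. The main obstacle is the algebraic bulk of this classification step: the Monge invariants depend on $(\epsilon,a)$ through cumbersome rational expressions, but the transition value $|a|=15/2$ separating the $E_3$ and $E_4$ regimes is expected to appear as the vanishing of the discriminant $\Delta(\tilde a,\tilde b)$ of \eqref{eq:41} along the index-$1/2$ branch, and $|a|=8$ as the parameter at which the two extra $E_5$ branches are born on the index-$0$ side. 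Once the local types are pinned down, Figures~\ref{fig:ace35} and~\ref{fig:ace} are obtained by inserting the $E_3, E_4, E_5$ local models from Theorem~\ref{th:41} into the $\alpha_0$-background of Proposition~\ref{prop:aco}.
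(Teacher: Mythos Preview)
Your counting argument is essentially the paper's: solve $\overline{a_0}=\overline{a_1}=0$, observe that the factor $uv$ in $\overline{a_0}$ persists for $\epsilon\ne 0$ (not just at $\epsilon=0$, as the explicit formula shows), and analyze the resulting equation on the axis $u=0$. The paper makes this precise by solving for $\epsilon$ rather than for $v$: it finds two branches $\epsilon=\epsilon_1(v)=\tfrac12(8+a)v^2+\cdots$ and $\epsilon=\epsilon_2(v)=\tfrac12(a-8)v^2+\cdots$, whose second-order contact at $0$ has opposite signs when $|a|<8$ and equal signs when $|a|>8$, which immediately gives the $2/4/0$ count you describe. Your leading-order heuristic with $(128-2a^2)v^4$ captures the same dichotomy.

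Where you diverge from the paper is the classification step. You propose to pass to the Monge chart of Proposition~\ref{prop.axial} and evaluate the discriminant $\Delta(\tilde a,\tilde b)$; as you anticipate, this is algebraically heavy and you do not actually extract the threshold $|a|=\tfrac{15}{2}$ from it. The paper avoids Monge form entirely. It first computes $\det D(\overline{a_0},\overline{a_1})$ at each axiumbilic point; its sign is $\pm 512(8\pm a)v_0^4+O(5)$, and negativity pins down the $E_5$ points directly (index $-\tfrac14$). For the remaining index-$\tfrac14$ points it linearizes the quartic \eqref{eq:dasing} at $(0,v_0)$, obtaining explicit linear coefficients $A_i(u,v)$, and reads the separatrix directions as the real roots of
\[
p_1(k)=k\bigl[k^4+8v_0^2(5+a)k^2-16v_0^4(15+2a)\bigr],\qquad
p_2(k)=k\bigl[k^4+8v_0^2(a-5)k^2+16v_0^4(2a-15)\bigr].
\]
The transition between three and five real roots of $p_1$ (resp.\ $p_2$) occurs exactly at $a=-\tfrac{15}{2}$ (resp.\ $a=\tfrac{15}{2}$), which yields the $E_3/E_4$ split without ever computing $\tilde a,\tilde b$. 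This direct separatrix count is the efficient substitute for your Monge-form reduction; it is worth internalizing, since it turns the ``cumbersome rational expressions'' you anticipate into a one-line discriminant of a biquadratic.
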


 \begin{proof}
As in equation \eqref{eq:23sing}   it follows that
  
$$\begin{array}{lcr}
  \overline{e_1} = 0,  \;\;\;
 \overline{f_1}=4v+\frac 12 a^2 v^3+a\epsilon v,\;\;
\overline{g_1}=  -u(4+a^2v^2)\\
\overline{e_2}=0,\;\;\;
 \overline{f_2}= auv^2-2\epsilon u,\;\;
\overline{g_2}= \frac 12 av^3(1+v^2)(8+a^2v^2)-(8\epsilon +2a\epsilon^2)(1+v^2)
 \end{array}
$$
 Therefore,

\begin{equation}\label{eq:a0a1be} 
\aligned \overline{a_1}=&[(24-2a^2)v^2-8] u^2+\frac 12  v^4[(a^2-2a)v^2 +16-2a][(a^2+2a)v^2+16+2a]
 \\
 +& 8\epsilon^4+16a\epsilon^3v^2+(8 u^2-8v^4+12a^2 v^4-8+48v^2)\epsilon^2\\
 +&4av^2(2+a^2v^4+2u^2+20v^2+2v^4)\epsilon \\
 \overline{a_0}=&uv[8+24v^2+a^2v^2+2a^2v^4+2a\epsilon(1+3v^2)+4\epsilon^2].
 \endaligned 
 \end{equation}

 The axiumbilic points of $\alpha_\epsilon$ are defined by $\overline{a_0}=\overline{a_1}=0$.

 So, for $\epsilon $ small they are defined by:

 $$ u^2+\epsilon^2=0, \;\; v=0$$
 $$\aligned u&=0,\;\;[(2a+a^2)v^4+(-4\epsilon+4a \epsilon  +16+2a)v^2+4\epsilon^2-4\epsilon].\\
 [&(-2a+a^2)v^4+(4\epsilon+ 4a\epsilon+16-2a)v^2+4\epsilon^2+4\epsilon]=0\endaligned$$

 For $\epsilon=0$ it follows that $ \frac 12 v^4(v^2a^2-2a+16-2av^2)(v^2a^2+2a+16+2av^2)$.

 Solving the equation

 $$ [(2a+a^2)v^4+(-4\epsilon+4a \epsilon  +16+2a)v^2+4\epsilon^2-4\epsilon]=0$$  
with respect   to $ \epsilon$  it follows that

 $$\aligned \epsilon=&  \frac 12  +\frac 12 (1 +a)v^2+\frac 12\sqrt{1+(1-4a)v^4-(4a+14)v^2}=& \\
  =& \frac 12(8+a)v^2+(12+8a+a^2)v^4+O(5)=\epsilon_1(v)
  \endaligned$$

 Solving the equation
 $$(-2a+a^2)v^4+(4\epsilon+ 4a\epsilon+16-2a)v^2+4\epsilon^2+4\epsilon]=0$$
\noindent  it follows that
   $$\aligned \epsilon=&  -\frac 12  -\frac 12(1 +a)v^2+\frac 12\sqrt{ 1+(4a+1)v^4+(4a-14)v^2}=& \\
  =&  \frac 12 (a-8)v^2-(12+a^2-8a)v^4+O(5)=\epsilon_2(v).\endaligned$$

  For $ a^2-64\leq 0$ the two curves $(v,\epsilon_1(v))$ and $(v,\epsilon_2(v))$
are tangent at $0$ and have  even contact 
of 
opposite signs.

 So, for
$\epsilon\ne 0$ there are two axiumbilic points $(0,\pm v_0(\epsilon))$ symmetric with respect to the $u$-axis. See Fig. \ref{fig:ax2}, center.

\begin{figure}[ht]
\begin{center}
\fbox{\includegraphics[scale=0.5]{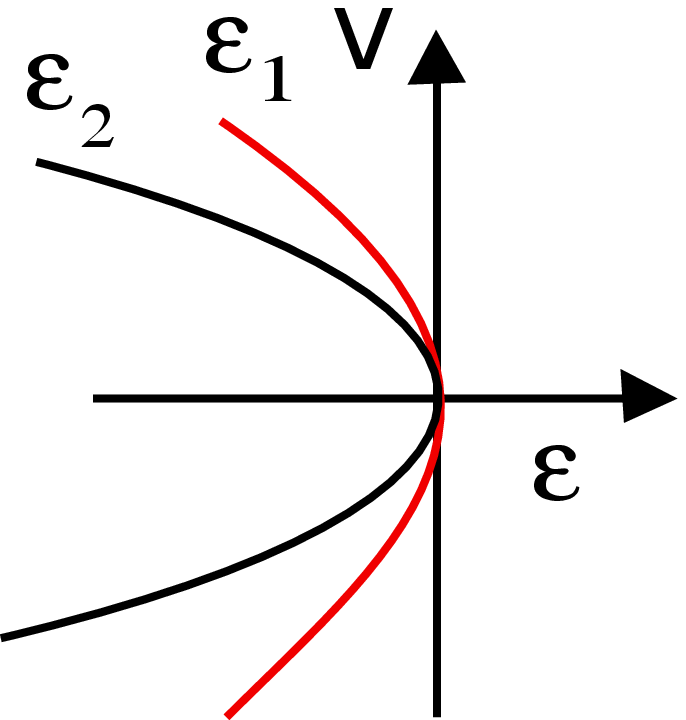}}\;\;\;\;\hskip 1cm
\fbox{\includegraphics[scale=0.5]{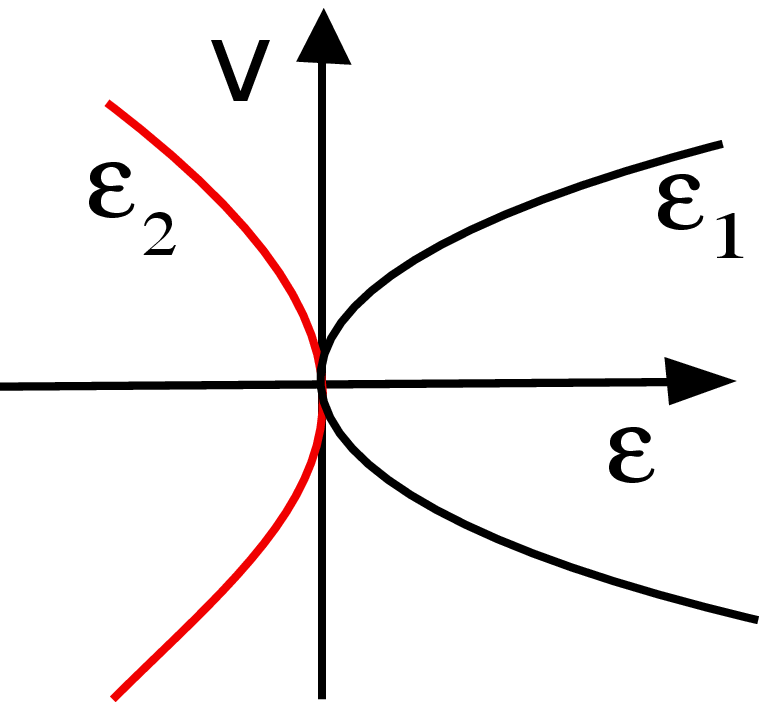}}\;\;\;\;\hskip 1cm
\fbox{\includegraphics[scale=0.5]{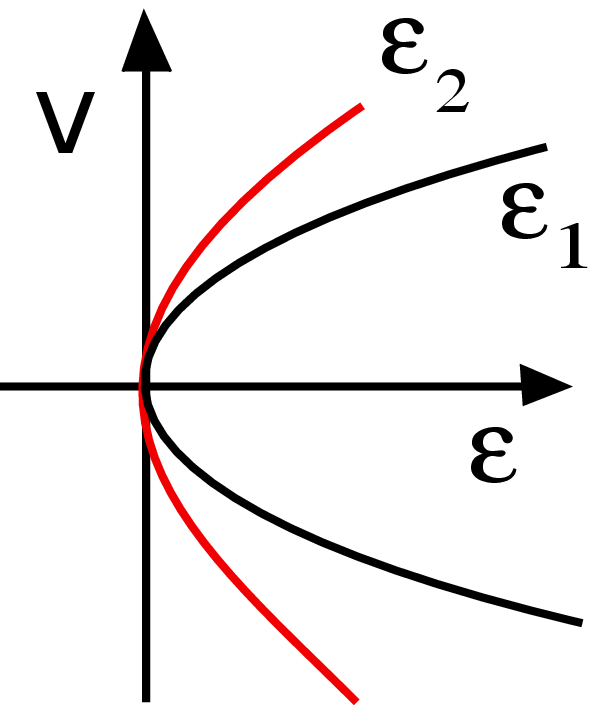}}
\caption{ Definition of  axiumbilic points \label{fig:ax2} (left, $a<-8$), (center, $|a|<8$),
(right, $a>8$).}
\end{center}
\end{figure}

For $ a^2-64> 0$ the two curves $(v,\epsilon_1(v))$ and $(v,\epsilon_2(v))$
are tangent at $0$ and have  even contact 
of 
the 
same signs. See Fig. \ref{fig:ax2} (left and right). Therefore $\alpha_\epsilon$ has four axiumbilic points when $a>8$ and $\epsilon >0$ and  
also when
$a<-8$ and $\epsilon <0$.

\vskip .4cm

\noindent{\bf Analysis of the axial configuration}.

 Consider the map $\beta(u,v)=(\overline{a_0}(u,v),\overline{a_1}(u,v))$, where $\overline{a_0}$ and $\overline{a_1}$ are given by equation \eqref{eq:a0a1be}.
 
 At an axiumbilic point defined by $\epsilon_1(v_0)=\epsilon$ it follows that
 $\text{det}D\beta(0,v_0)=512 (8+a)v_0^4+O(5)$ and for an axiumbilic point defined by
  $\epsilon_2(v_0)=\epsilon$ it follows that
 $\text{det}D\beta(0,v_0)=-512 (a-8)v_0^4+O(5)$.
 
 An axiumbilic point  is of type $E_5$ when $\text{det}D\beta(0,v_0)<0$, see \cite{sotogarcia1}.
 When $\text{det}D\beta(0,v_0)>0$ the axiumbilic point is of index $1/4$ and the type is not characterized by this sign.
 This  follows from Proposition \ref{prop.axial} and \ref{th:41}.

 Therefore, for $a>8$  two axiumbilic points defined by $\epsilon_2(v_0)=\epsilon$  are of type $E_5$ (index $-\frac 14$) and for $a<-8$ the two axiumbilic points defined by $\epsilon_1(v_0)=\epsilon$ are of type $E_5$.

For $|a|>8$   
it will 
be 
shown below that they are of type $E_3$.

To describe the type of these axiumbilic points  consider
 the linearization of the differential equation \eqref{eq:dasing} of axial lines at an axiumbilic point $(0,v_0(\epsilon))$.

Performing 
the calculations it follows that for  $\epsilon=\epsilon_1(v_0)$,

\begin{equation}\aligned 
{\mathcal E}_1(u,v,du,dv)=&A_4dv^4+A_3 dudv^3+A_2 dv^2 du^2+A_1dvdu^3+A_0du^4=0\\
A_0(u,v)= & [8v_0+O(2)] u+ 0.(v-v_0)+\ldots\\
 A_1(u,v)=& 0. u+[ 64(8+a)v_0^3+O(4)](v-v_0)+\ldots\\
 A_2(u,v)=& [-192v_0^3+0(4)]u+ 0.(v-v_0)+\ldots\\
A_3(u,v)=& 0.u - [256(a+8) v_0^5+O(6)].(v-v_0)+\ldots\\
 A_4(u,v)=& [128 v_0^5+0(6)]u+ 0.(v-v_0)+\ldots\endaligned\end{equation}

Further  calculations  for  $\epsilon=\epsilon_2(v_0)$ lead to
\begin{equation}\aligned 
{\mathcal E}_2(u,v,du,dv)=& A_4dv^4+A_3 dudv^3+A_2 dv^2 du^2+A_1dvdu^3+A_0du^4=0\\
A_0(u,v)= & [8v_0+O(2)] u+ 0.(v-v_0)+\ldots\\
A_1(u,v)=& 0. u+[ 64(8-a)v_0^3+O(4)](v-v_0)+\ldots\\
 A_2(u,v)=& [-192v_0^3+0(4)]u+ 0.(v-v_0)+\ldots\\
 A_3(u,v)=& 0.u - [256(8-a) v_0^5+O(6)].(v-v_0)+\ldots\\
A_4(u,v)=& [128 v_0^5+0(6)]u+ 0.(v-v_0)+\ldots\endaligned\end{equation}

 To  determine  the   type    of 
 the axiumbilic points $(0,v_0)$ defined by $\epsilon=\epsilon_1(v_0)$ consider the linear differential equation

 $$\aligned E_1(u,v,du,dv)=& 8v_0 du^4+  64(8+a)v_0^3(v-v_0)du^3dv-192v_0^3 u du^2 dv^2\\ -&256(a+8) v_0^5.(v-v_0)dudv^3+
 128 v_0^5 u dv^4=0.\endaligned$$

 The separatrices of the linear equation are defined by $u=kv$
 where $k$ is a root of the polynomial

 $$p_1(k)=k[k^4+8v_0^2(5+a)k^2-16v_0^4(15+2a)]=0.$$

 For $a<-\frac{15}{2}$ the polynomial $p_1(k)$ has five real roots
  while 
   for $a>-\frac{15}{2}$
 it has only three real roots.

 Analogously, for the axiumbilic points $(0,v_0)$ defined by $\epsilon=\epsilon_2(v_0)$
 consider the differential equation

 $$\aligned E_2(u,v,du,dv)=& 8v_0 du^4+  64(8-a)v_0^3(v-v_0)du^3dv-192v_0^3 u du^2 dv^2\\ -&256(8-a) v_0^5.(v-v_0)dudv^3+
 128 v_0^5 u dv^4=0.\endaligned$$

 The separatrices are defined by $u=kv$ where
 $$p_2(k)= k[k^4+ 8v_0^2(a-5)k^2+16v_0^4(2a-15)]=0.$$

 For $a>\frac{15}{2}$ the polynomial $p_2(k)$ has five real roots, while for $a<\frac{15}{2}$
 it has only three real roots.

 Therefore, by the classification of axiumbilic points, 
 see \cite{sotogarcia1},
  for
 $|a|<\frac{15}2$ and $\epsilon >0$ the axiumbilic points $(0,v_0)$ are defined by $\epsilon=\epsilon_1(v_0)$ and they are all of type $E_3$. 
Also  $\frac{15}2<|a|<8$ and $\epsilon >0$ all axiumbilic points are defined by $\epsilon=\epsilon_1(v_0)$, see Fig. \ref{fig:ax2} center, 
 and they are of type $E_4$. Analogously, when $\epsilon <0$ the axiumbilic points are defined by $\epsilon=\epsilon_2(v_0)$ and the same analysis and conclusions can be established.

For  $a>8$ and $\epsilon>0$ the axiumbilic points defined by $\epsilon=\epsilon_2(v_0)$
 are of type $E_5$ and
 the other two defined by $\epsilon=\epsilon_1(v_0)$ have the separatrices given by $p_1(k)=0$ which has three real roots for $a>-\frac{15}2$.  Therefore they are of type $E_3$, see Fig. \ref{fig:ace35}, top.

 For  $a<-8$  and $\epsilon<0$  the axiumbilic points defined   
   by $\epsilon=\epsilon_1(v_0)$ are of type $E_5$ and the other two defined by $\epsilon=\epsilon_2(v_0)$
 have the separatrices given by $p_2(k)=0$ which has three real roots for $a<\frac{15}2$.  Therefore they are of type $E_3$, see Fig. \ref{fig:ace35}, bottom.

    The analysis above also follows from Proposition \ref{prop.axial}, taking into account Fig. \ref{fig:d345}, illustrating Theorem \ref{th:41}  established in  \cite{sotogarcia1}.   \end{proof}

\section{Concluding  Comments} \label{sec:f_Comments}

The results established in this work are motivated and provide a continuation 
of the previous paper by Garcia,
Sotomayor  and  Spindola  \cite{flausino_arxiv}.

The authors believe that these  results   describe  
 the axial configurations at the generic  critical point of a surface mapped into $\R ^4$, as illustrated in Figures \ref{fig:acm} and \ref{fig:aco}.  The results also  present  a  rough description of  important  partial elements of  the transversal,  codimension$-1$,  bifurcations occurring by the elimination of the critical point.  
 
For the full description  of the generic bifurcation phenomenon, a delicate  analysis  of the breaking of the axiumbilic separatrix connections in Figures  
\ref{fig:ace35}
and 
\ref{fig:ace}  
must be carried out.

 This connection breaking is due to the presence of coefficients of the  third order jet of the mapping omitted in the example treated here.

\addcontentsline{toc}{section}{\bibname}

\vskip 1.5cm

\author{\noindent Ronaldo Garcia\\Instituto de Matem\'atica e Estat\'{\i}stica \\
Universidade Federal de Goi\'as,\\ CEP 74001--970, Caixa Postal 131 \\
Goi\^ania, Goi\'as, Brazil \\
 \email{ragarcia@mat.ufg.br  }
\vskip 1.0cm

\author{\noindent Jorge Sotomayor\\ Instituto de Matem\'atica e Estat\'{\i}stica \\
Universidade  de S\~ao Paulo,\\
 Rua do Mat\~ao  1010,
Cidade Univerit\'aria, CEP 05508-090,\\
S\~ao Paulo, S. P, Brazil \\
 \email{sotp@ime.usp.br}
 \vskip 0.5cm

\end{document}